\begin{document}
\title{Two approaches towards property (T) for groups acting on simplicial complexes}
\author{Izhar Oppenheim}
\affil{Department of Mathematics, The Technion-Israel Institute of Technology, 32000 Haifa, Israel \\ 
E-mail: izharo@gmail.com}

\newtheorem{theorem}{Theorem}[section]
\newtheorem{lemma}[theorem]{Lemma}
\newtheorem{definition}[theorem]{Definition}
\newtheorem{claim}[theorem]{Claim}
\newtheorem{example}[theorem]{Example}
\newtheorem{remark}[theorem]{Remark}
\newtheorem{proposition}[theorem]{Proposition}
\newtheorem{corollary}[theorem]{Corollary}
\maketitle
\textbf{Abstract}. This article generalizes two approaches for property (T) - the first is a generalization of \.{Z}uk's criterion for property (T) and the second is a generalization of the work of Kassabov regarding property (T) and subspace arrangements.  In both cases we obtain new criteria for property (T) (and for vanishing of higher $L^2$-cohomologies).   \\ \\
\textbf{Mathematics Subject Classification (2010)}. Primary 20F65; Secondary 05C25 \\ \\
\textbf{Keywords}. Property (T), cohomology, simplicial complex, Laplacian
\section{Introduction}
We'll start by briefly discussing two approaches toward proving property (T).

\subsection{Criteria for property (T) regarding Laplacian eigenvalues} 
For a finite graph $L$ with a set of vertices $V_{L}$, the Laplacian
of the graph $\Delta^+$ is an operator on the space of real valued
functions on $V_{L}$ which is defined as \[
\Delta^+ f(v)=f(v)-\frac{1}{m(v)}\sum_{u\sim v}f(u)\]
where $m(v)$ is the valance of $v$ and $u\sim v$ means that there
is an edge connecting $u$ and $v$. The Laplacian is a positive operator and we denote by $\lambda(L)$ its smallest positive eigenvalue. One can generalize the definition of the Laplacian
so it will be defined for a simplicial complex  $X$ of any dimension. For
such a complex the Laplacian is again a positive
operator and we denote by $\lambda(X)$ its smallest positive eigenvalue.

Ballmann and \'{S}wiatkowski in \cite{BS} and \.{Z}uk in \cite{Zuk} gave criteria
for the vanishing of the $L^{2}$-cohomology of a group $\Gamma$
acting on a simplicial complex $X$, by considering the values of
$\lambda$ for the links of $X$. More in specifically \cite{BS} and
\cite{Zuk} proved the following theorems:

\newtheorem*{BS}{\cite[Theorem 2.5]{BS}}

\begin{BS}
\label{thmbs}

Let $X$ be a locally finite simplicial complex of dimension $n$
and let $\Gamma$ be a group acting properly discontinuously and by
automorphisms on $X$. Assume that for every simplex $\eta$ of dimension
$k-1$, the link of $\eta$ denoted by $X_{\tau}$ is a connected
simplicial complex and that there is $\varepsilon>0$ such that $\lambda(X_{\eta})\geq\frac{k(n-k)}{k+1}+\varepsilon$, then $L^{2}H^{k}(X,\rho)=0$ for any unitary representation $\rho$ of $\Gamma$.

\end{BS}

\newtheorem*{Z}{\cite[Theorem 1]{Zuk}}

\begin{Z}

Let $X$ be a locally finite simplicial complex of dimension $2$
and let $\Gamma$ be a group acting properly discontinuously and freely
by automorphisms on $X$. Assume that for every vertex $v$, the link
of $v$ denoted by $X_{v}$ is a connected graph and that for every
two connected vertices $u,v$ in $X$ we have the following inequality
$\frac{\lambda(X_{u})+\lambda(X_{v})}{2}>\frac{1}{2}$, then $L^{2}H^{1}(X,\rho)=0$
for any unitary representation $\rho$ of $\Gamma$.

\end{Z}

In \cite{DJ1} the above theorems were generalized by Dymara and Januszkiewicz to a more general setting in which $\Gamma$ isn't necessarily discrete but just locally compact and unimodular.

If $\Gamma$ acts cocompactly and properly 
discontinuously by automorphisms on a contractible simplicial complex $X$, then the following are equivalent:
\begin{enumerate}
\item $\Gamma$ has property (T).
\item $L^{2}H^{1}(X,\rho)=0$ for any unitary representation $\rho$ of
$\Gamma$ (see \cite{BHV}).
\item $L^{2}\mathcal{H}^{1}(X,\rho)=0$ for any unitary representation $\rho$ of
$\Gamma$, where $L^{2}\mathcal{H}^{1}(X,\rho)$ is the first reduced $L^2$-cohomology (see \cite{BHV}, \cite{Shalom}). 
\end{enumerate} 
 
Therefore the above theorems give criteria for property (T) when $\Gamma$ acts cocompactly and properly
on a contractible locally finite simplicial complex of dimension $2$. 

\subsection{Criterion for property (T) regarding angles}
A different approach towards property (T) was taken in by Kassabov in \cite{K}. In \cite{K} a concept of a cosine, denote as $cos(V,U)$, between two Hilbert subspaces $V,U$ is defined. This concept is used to in the following theorem:
\newtheorem*{K}{\cite[Theorem 5.1]{K}}
\begin{K}
Let  $V_1,...,V_n$ be closed subspaces of a Hilbert space $H$. Suppose that the $n \times n$ symmetric matrix 
$$ A = \left( \begin{array}{ccccc}
 1 & -\varepsilon_{12} &  -\varepsilon_{13} & \cdots & -\varepsilon_{1n} \\
 -\varepsilon_{21}  & 1 &  -\varepsilon_{23} & \cdots & -\varepsilon_{2n} \\
-\varepsilon_{31}  & -\varepsilon_{32} &  1 & \cdots & -\varepsilon_{3n} \\
\vdots  & \vdots &  \vdots & \ddots & \vdots \\
-\varepsilon_{n1}  & -\varepsilon_{n2} &  -\varepsilon_{n3} & \cdots & 1 

\end{array} \right)$$
where $\varepsilon_{ij}=cos(V_i,V_j)$ is positive definite. Then for every $v \in H$ we have 
$$d (v,\bigcap V_i) \leq d_v^t A^{-1} d_v$$
where $d_v$ is the column vector with entries $d(v,V_i)$.  
\end{K}

This theorem can be used to give a criterion for property (T) in the case that $\Gamma = \langle G_1,...,G_n \rangle$, where $G_1,...,G_n$ are compact groups. 
The criterion is as follows: for a unitary representation $\rho: \Gamma \rightarrow \mathcal{U} (H_\rho)$ denote by $H_\rho^{G_i}$ the closed subspace of $H_\rho$ fixed by $G_i$. Define 
$$\varepsilon_{ij} = inf_\rho \lbrace cos(H_\rho^{G_i}, H_\rho^{G_j}) \rbrace$$ 
if 
$$ A = \left( \begin{array}{ccccc}
 1 & -\varepsilon_{12} &  -\varepsilon_{13} & \cdots & -\varepsilon_{1n} \\
 -\varepsilon_{21}  & 1 &  -\varepsilon_{23} & \cdots & -\varepsilon_{2n} \\
-\varepsilon_{31}  & -\varepsilon_{32} &  1 & \cdots & -\varepsilon_{3n} \\
\vdots  & \vdots &  \vdots & \ddots & \vdots \\
-\varepsilon_{n1}  & -\varepsilon_{n2} &  -\varepsilon_{n3} & \cdots & 1 

\end{array} \right)$$
is a positive matrix, then $\Gamma$ has property (T) (we refer the reader to \cite[Observation 2.1]{K} for the explanation of the connection between the theorem and the criterion).

Observe that this theorem can be applied in the case that $\Gamma$ is a group acting properly on an $n$-dimensional connected simplicial complex and the fundamental domain is a single ($n$-dimensional) simplex denoted by $\sigma$. In this case, $\Gamma$ is generated by the stabilizers of the $(n-1)$-dimensional faces of $\sigma$ denoted by $G_1,...,G_{n+1}$. Note that since the action is proper, we get that $G_i$ is compact for every $i$.  

\subsection{New criteria for property (T)}

In this article we shall generalize the above approaches to get new criteria for vanishing of $L^2$-cohomologies for groups acting on simplicial complexes. 
The two generalizations are of different nature: the generalization of the \cite[Theorem 2.5]{BS} and \cite[Theorem 1]{Zuk} are basically some sort of averaging along a simplex of the first positive eigenvalues of the Laplacian at the links. For the $2$-dimensional case we get the following theorem: 
\newtheorem*{Theorem 1}{Theorem 1}
\begin{Theorem 1}
Let $\Gamma$ be a locally compact, properly discontinuous, unimodular group of automorphisms
of $X$ acting cocompactly on $X$ such that $X$ is a locally finite  contractible $2$-dimensional simplicial complex. For a $1$-simplex $(u,v)$ of $X$ denote
$$S_{(u,v)}= \lambda (X_u) + \lambda (X_v) -1$$
If for every $2$-simplex $(u,v,w)$ of $X$ the following holds:
\begin{itemize}
\item $ \lambda (X_u) + \lambda (X_v) + \lambda (X_w) >\frac{3}{2}$
\item $ S_{(u,v)}S_{(u,w)} + S_{(u,v)}S_{(v,w)} + S_{(u,w)}S_{(v,w)} >0$
\end{itemize}
then $\Gamma$ has property (T).
\end{Theorem 1}

There are some interesting examples for the above theorem coming from the sporadic finite simple groups. Here is one such example: in \cite{Kantor} it is shown that the Lyons group (which is a sporadic finite simple group) acts on a finite simplicial complex $Y$ and that the universal cover of $Y$ is an exotic affine building. This yields an extension of the Lyons group by $\pi_1 (Y)$: 
$$ 		0 \longrightarrow \pi_1 (Y) \longrightarrow \widetilde{Lyons} \longrightarrow Lyons \longrightarrow 0 $$
As it turns out, \.{Z}uk's criterion doesn't show that $\pi_1 (Y)$ has property (T) but our criterion holds and we are able to prove property (T) for $\pi_1 (Y)$ and $\widetilde{Lyons}$. There are $2$ other examples of this nature in the last section of this article.  
In addition, there are some hyperbolic Kac-Moody groups where our criterion holds and \.{Z}uk's criterion fails. \\ \\

The generalization of \cite[Theorem 5.1]{K} deals with the case that the fundamental domain of the group action is not a single simplex (as in \cite{K}) but a finite simplicial complex. In order to state our theorem  we introduce the following terminology: let $Y=(V,E)$ be a connected finite graph (with no loops or multiple edges), $\Lambda$ be a subgroup of $Aut(Y)$ and $\rho: \Lambda \rightarrow \mathcal{U} (H)$ be a unitary representation ($H$ is a Hilbert space). Denote
 $$C^0 (Y,\rho)= \lbrace \phi : V \rightarrow H : \phi \text{ is equivariant} \rbrace$$
Define the reduced cosine of $Y$ with respect to $\Gamma$ and $\rho$ to be 
$$ cos_r (Y,\Lambda,\rho) = sup \lbrace \dfrac{\sum_{(u,v) \in E}  \langle \phi (u) , \phi (v) \rangle }{\sum_{(u,v) \in E} \vert \phi (u)\vert \vert \phi (v) \vert}: 0 \neq \phi \in C^0 (Y,\rho), \sum_{(u,v) \in E} \phi (u) + \phi (v) =0  \rbrace$$ 
and define
$$ cos_r (Y,\Lambda) = sup \lbrace cos_r (Y,\Gamma,\rho) : \rho \text{ is a unitary representation of } \Gamma \rbrace$$
Now we can state the theorem for in the $2$-dimensional case:  

\newtheorem*{Theorem 2}{Theorem 2}
\begin{Theorem 2}
Let $\Gamma$ be a locally compact, properly discontinuous, unimodular group of automorphisms
of $X$ acting cocompactly on $X$ such that $X$ is a locally finite  contractible $2$-dimensional simplicial complex. For a vertex $u$ denote by $\Gamma_u$ the stabilizer of $u$ in $\Gamma$ acting on $X_u$. 
If for every $2$-simplex $(u,v,w)$ of $X$ the matrix:
$$ A (u,v,w) = \left( \begin{array}{ccc}
 1 & -cos_r (X_u, \Gamma_u) &  -cos_r (X_v, \Gamma_v) \\
 -cos_r (X_u, \Gamma_u)  & 1 &  -cos_r (X_w, \Gamma_w) \\
-cos_r (X_v, \Gamma_v)  & -cos_r (X_w, \Gamma_w) & 1
\end{array} \right)$$
is positive definite then $\Gamma$ has property (T).
\end{Theorem 2}

\textbf{Structure of the paper.} Section 2 is devoted to introducing the framework developed in \cite{BS} and \cite{DJ1}, section 3 contains the vanishing results for $L^2$-cohomologies relaying on Laplacian eigenvalue, section 4 contains vanishing results for $L^2$-cohomologies relaying on cosines of links and section 5 contains examples of groups in which Theorem 1 proves property (T) (sadly, at the this time we are not able to produce examples for Theorem 2 that differ from the ones given in \cite{K}) . \\ 

\textbf{Acknowledgments.} The author wishes to thank his advisor Uri Bader, for many useful conversations and much encouragement along the way.

\section{Framework}

Here we introduce the framework constructed in \cite{BS} and \cite{DJ1}. Throughout this section $X$ is a locally finite simplicial complex of dimension $n$ such that all the links of $X$ are connected, $\Gamma$ is a locally compact, properly discontinuous, unimodular group of automorphisms of $X$ and $\rho$ is a unitary representation of $\Gamma$ on a complex Hilbert space $H$.

\subsection{general settings}

Following \cite{BS} we introduce the following notations:
\begin{enumerate}
\item For $0\leq k\leq n$, denote by $\Sigma(k)$ the set of ordered $k$-simplices (i.e. $\sigma \in \Sigma(k)$ is and ordered $k+1$-tuple of vertices) and choose a set $\Sigma(k,\Gamma)\subseteq\Sigma(k)$ of representatives
of $\Gamma$-orbits.
\item For a simplex $\sigma\in\Sigma(k)$ denote by $m(\sigma)$ the number
of $n$-simplices containing $\sigma$ without ordering, i.e. if $\sigma = (v_0,...,v_k)$ then $m(\sigma)$ is the number of different unordered $n$-simplices which contain $\lbrace v_0,...,v_k \rbrace$. We shall assume that $m(\sigma)\geq 1$ for every $\sigma$. 
\item For $\tau = (w_0,...,w_k), \sigma = (v_0,...,v_l)$ s.t. $k \leq l$, we denote $\tau \subset \sigma$ if $\lbrace w_0,...,w_k \rbrace \subset \lbrace v_0,...,v_l\rbrace$ and  $\tau \sqsubset \sigma$ if there is a \underline{monotone} function $f:\lbrace 0,...,k \rbrace \rightarrow \lbrace 0,...,l \rbrace$ such that $w_i = v_{f(i)}$.
\item For a simplex $\sigma\in\Sigma(k)$, denote by $\Gamma_{\sigma}$
the stabilizer of $\sigma$ and by $\vert \Gamma_{\sigma} \vert$ the measure of $\Gamma_{\sigma}$ with respect to the Haar measure.  
\item For $0\leq k\leq n$, denote by $C^{k}(X,\rho)$ the space of simplicial
$k$-cochains of $X$ which are twisted by $\rho$, that is, $\phi\in C^{k}(X,\rho)$
is an alternating map on ordered $k$-simplices of $X$ with values
in $H$ such that \[
\forall\gamma\in\Gamma,\forall x\in\Sigma(k),\rho(\gamma)\phi(x)=\phi(\gamma x)\]

\end{enumerate}
For $\phi\in C^{k}(X,\rho)$ define $\phi$ to be \emph{square integrable
mod $\Gamma$ }if \[
\left\Vert \phi\right\Vert ^{2}:=\sum_{\sigma\in\Sigma(k,\Gamma)}\frac{m(\sigma)}{(k+1)!\left|\Gamma_{\sigma}\right|}\left\langle \phi(\sigma),\phi(\sigma)\right\rangle <\infty\]

Denote by $L^{k}(X,\rho)$ the space of all square integrable cochains
in $C^{k}(X,\rho)$. On $L^{k}(X,\rho)$ there is an Hermitian form
given by \[
\left\langle \phi,\psi\right\rangle :=\sum_{\sigma\in\Sigma(k,\Gamma)}\frac{m(\sigma)}{(k+1)!\left|\Gamma_{\sigma}\right|}\left\langle \phi(\sigma),\psi(\sigma)\right\rangle ,\;\phi,\psi\in L^{k}(X,\rho)\]

Note that if $\Gamma$ acts cocompactly on $X$ then $L^{k}(X,\rho)=C^{k}(X,\rho)$.
To distinguish the norm of $L^{k}(X,\rho)$ from the norm of $H$
we will use $\left|.\right|$ for the norm of $H$ (i.e. $\left\langle \phi(\sigma),\phi(\sigma)\right\rangle =\left|\phi(\sigma)\right|^{2}$).

For $0\leq k<n$, the \emph{differential} $d:C^{k}(X,\rho)\rightarrow C^{k+1}(X,\rho)$
is given by \[
d\phi(\sigma):=\sum_{i=0}^{k+1}(-1)^{i}\phi(\sigma_{i}),\;\sigma\in\Sigma(k+1)\]

Where $\sigma_{i}=\left(v_{0},...,\hat{v_{i}},...,v_{k+1}\right)$
for $\left(v_{0},...,v_{k+1}\right)=\sigma\in\Sigma(k+1)$.
\begin{proposition}
In \cite{BS} the following facts are proved:
\begin{enumerate}
\item \cite[Proposition 1.5]{BS} When the differential is restricted
to $d:L^{k}(X,\rho)\rightarrow L^{k+1}(X,\rho)$ it is a bounded operator.
\item \cite[Proposition 1.6]{BS} The adjoint operator of $d$, denoted
by $\delta:L^{k+1}(X,\rho)\rightarrow L^{k}(X,\rho)$ is \[
\delta\phi(\tau)=\sum_{v\in\Sigma(0),v\tau\in\Sigma(k+1)}\frac{m(v\tau)}{m(\tau)}\phi(v\tau),\;\tau\in\Sigma(k)\]
where $v\tau=(v,v_{0},...,v_{k})$ for $\tau=(v_{0},...,v_{k})$ 
\item \cite[Corollary 1.7]{BS} For $\phi\in L^{k}(X,\rho)$ and $\sigma\in\Sigma(k)$,
\[
\delta d\phi(\sigma)=(n-k)\phi(\sigma)-\sum_{v\in\Sigma(0),v\tau\in\Sigma(k+1)}\sum_{0\leq i\leq k}(-1)^{i}\frac{m(v\sigma)}{m(\sigma)}\phi(v\sigma_{i})\]

\end{enumerate}
\end{proposition}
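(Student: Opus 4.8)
All three statements are attributed to \cite{BS} (and, in the non-discrete unimodular generality, to \cite{DJ1}), so the plan is to recall their proofs and note that the only change from the untwisted coefficient systems treated there is that cochains take values in the Hilbert space $H$ and carry the $\rho$-twist; since $\rho$ is unitary, every norm and inner-product computation below is insensitive to this change. Thus I would first isolate the one genuinely technical ingredient — a weighted $\Gamma$-averaging (``unfolding'') identity relating orbit-sums over $(k+1)$-simplices to orbit-sums over $k$-simplices — and then derive (1)--(3) from it essentially mechanically.

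\textbf{Boundedness of $d$.} First I would apply Cauchy--Schwarz pointwise, $|d\phi(\sigma)|^2 \le (k+2)\sum_{i=0}^{k+1}|\phi(\sigma_i)|^2$, substitute into $\|d\phi\|^2 = \sum_{\sigma\in\Sigma(k+1,\Gamma)}\frac{m(\sigma)}{(k+2)!|\Gamma_\sigma|}|d\phi(\sigma)|^2$, and then re-index the resulting double sum over pairs $(\sigma,i)$ by the face $\tau=\sigma_i$. Local finiteness guarantees that each $\tau\in\Sigma(k)$ lies in only finitely many $\sigma\in\Sigma(k+1)$, and a uniform bound on the number of such $\sigma$, together with the comparison of the weights $\frac{m(\sigma)}{(k+2)!|\Gamma_\sigma|}$ and $\frac{m(\tau)}{(k+1)!|\Gamma_\tau|}$ along an inclusion $\tau\subset\sigma$, yields $\|d\phi\|^2 \le C\|\phi\|^2$. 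The weight comparison is exactly the content of the unfolding identity.

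\textbf{The adjoint.} I would expand $\langle d\phi,\psi\rangle$ from the definition, producing $\sum_{\sigma\in\Sigma(k+1,\Gamma)}\frac{m(\sigma)}{(k+2)!|\Gamma_\sigma|}\sum_{i}(-1)^i\langle\phi(\sigma_i),\psi(\sigma)\rangle$, and then apply the unfolding identity, which schematically reads
\[ \sum_{\sigma\in\Sigma(k+1,\Gamma)}\frac{m(\sigma)}{(k+2)!|\Gamma_\sigma|}\sum_{\tau\sqsubset\sigma}G(\tau,\sigma) \;=\; \sum_{\tau\in\Sigma(k,\Gamma)}\frac{m(\tau)}{(k+1)!|\Gamma_\tau|}\sum_{v:\,v\tau\in\Sigma(k+1)}\frac{m(v\tau)}{m(\tau)}\,G(\tau,v\tau), \]
where ``orbit-sum'' means an integral over $\Gamma/\Gamma_\sigma$ against Haar measure when $\Gamma$ is not discrete, and unimodularity is what makes the two sides agree. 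Feeding $G(\tau,\sigma)=\pm\langle\phi(\tau),\psi(\sigma)\rangle$ (the sign absorbed by alternation) into this identity collapses $\langle d\phi,\psi\rangle$ to $\sum_{\tau\in\Sigma(k,\Gamma)}\frac{m(\tau)}{(k+1)!|\Gamma_\tau|}\big\langle\phi(\tau),\sum_{v}\frac{m(v\tau)}{m(\tau)}\psi(v\tau)\big\rangle=\langle\phi,\delta\psi\rangle$ with $\delta$ as stated. Finally, the formula for $\delta d$ is pure substitution: $\delta d\phi(\sigma)=\sum_{v}\frac{m(v\sigma)}{m(\sigma)}\,d\phi(v\sigma)$, and in each $d\phi(v\sigma)$ the term deleting $v$ contributes $+\phi(\sigma)$, so these terms sum to $\big(\sum_{v}\frac{m(v\sigma)}{m(\sigma)}\big)\phi(\sigma)=(n-k)\phi(\sigma)$ — here $\sum_{v}m(v\sigma)=(n-k)m(\sigma)$ because each $n$-simplex containing $\sigma$ has exactly $n-k$ vertices outside $\sigma$ — while the remaining terms reassemble into the double sum displayed in Corollary~1.7.

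\textbf{Main obstacle.} The only real work is the unfolding identity of the second step: getting the normalization constants $(k+1)!$, $(k+2)!$, $m(\sigma)$ and $|\Gamma_\sigma|$ to match, and verifying the identity in the locally compact unimodular setting (integration over $\Gamma/\Gamma_\sigma$) rather than only for discrete cocompact $\Gamma$. Once this weighted reciprocity is in place, all three assertions follow formally, and I would simply cite \cite{BS} and \cite{DJ1} for the details.
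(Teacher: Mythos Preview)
Your proposal is correct, and in fact it is more detailed than the paper itself: the paper offers no proof at all for this proposition, simply recording the three statements with citations to \cite{BS}. The unfolding identity you single out as the ``main obstacle'' is precisely what the paper later states (also without proof) as item (1) of Proposition~\ref{resultfrombs}, i.e.\ \cite[Lemma~1.3]{BS} and \cite[Lemma~3.3]{DJ1}; so your plan to derive everything from that identity and then cite \cite{BS}, \cite{DJ1} matches the paper's treatment exactly.
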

\begin{remark}
Note that if $X$ is a 1-dimentional connected graph, $k=0$
and $H=\mathbb{R}$, then $\delta d$ is the (classical) graph Laplacian. 
\end{remark}

\subsection{$L^{2}$-cohomology}

The $L^{2}$-cohomology of $X$ with respect to $\rho$ is defined
as \[
L^{2}H^{k}(X,\rho)=ker(d\mid L^{k}(X,\rho))/im(d\mid L^{k-1}(X,\rho))\]

And the reduced $L^{2}$-cohomology of $X$ with respect to $\rho$ is defined
as \[
ker(d\mid L^{k}(X,\rho))/\overline{im(d\mid L^{k-1}(X,\rho))}\]  

Note that if $\Gamma$ acts cocompactly on a contractible $X$ then $L^{2}H^{k}(X,\rho)=L^2 H^{k}(\Gamma,\rho)$.

Let \[
\Delta^{+}=\delta d,\Delta^{-}=d\delta,\Delta=\Delta^{+}+\Delta^{-}\]

Define the space of $L^{2}\mathcal{H}^{k}(X,\rho)$ of square integrable
mod $\Gamma$ \emph{harmonic k-forms} twisted by $\rho$ to be \[
L^{2}\mathcal{H}^{k}(X,\rho)=ker(\Delta\mid L^{k}(X,\rho))=ker(\Delta^{+}\mid L^{k}(X,\rho))\cap ker(\Delta^{-}\mid L^{k}(X,\rho))\]

We have \[
(ker(\Delta^{+}\mid L^{k}(X,\rho)))^{\bot}=\overline{im(\Delta^{+}\mid L^{k}(X,\rho))}=\overline{im(\delta\mid L^{k+1}(X,\rho))}\]

\[
(ker(\Delta^{-}\mid L^{k}(X,\rho)))^{\bot}=\overline{im(\Delta^{-}\mid L^{k}(X,\rho))}=\overline{im(d\mid L^{k-1}(X,\rho))}\]

and the orthogonal decompositions 

\[
ker(\Delta^{+}\mid L^{k}(X,\rho))=L^{2}\mathcal{H}^{k}(X,\rho)\oplus\overline{im(\Delta^{-}\mid L^{k}(X,\rho))}\]

\[
ker(\Delta^{-}\mid L^{k}(X,\rho))=L^{2}\mathcal{H}^{k}(X,\rho)\oplus\overline{im(\Delta^{+}\mid L^{k}(X,\rho))}\]

In particular, \[
L^{2}\mathcal{H}^{k}(X,\rho)=ker(d\mid L^{k}(X,\rho))/\overline{im(d\mid L^{k-1}(X,\rho))}\]

So $L^{2}\mathcal{H}^{k}(X,\rho)$ equals the reduced cohomology.

\subsection{Localization}

Let $\left(v_{0},...,v_{j}\right)=\tau\in\Sigma(j)$, denote by $X_{\tau}$
the \emph{link} of $\tau$ in $X$, that is, the subcomplex of dimension
$n-j-1$ consisting on simplices $\sigma=\left(w_{0},...,w_{k}\right)$
such that $\left\{ v_{0},...,v_{j}\right\} ,\left\{ w_{0},...,w_{k}\right\}$ are disjoint as sets
and $\left(v_{0},...,v_{j},w_{0},...,w_{k}\right)=\tau\sigma\in\Sigma(j+k+1)$.
The isotropy group $\Gamma_{\tau}$ acts by automorphisms on $X_{\tau}$
and if we denote by $\rho_{\tau}$ the restriction of $\rho$ to $\Gamma_{\tau}$,
we get that $\rho_{\tau}$ is a unitary representation of $\Gamma_{\tau}$.
Note that since $X$ is locally finite and $\Gamma$ acts properly discontinuously,
we get that $X_{\tau}$ is finite and $\Gamma_{\tau}$ is compact. Denote as in the general settings:
\begin{enumerate}
\item For $0\leq k\leq n-j-1$, denote by $\Sigma_{\tau}(k)$ the set of
ordered $k$-simplices and choose a set $\Sigma_{\tau}(k,\Gamma_{\tau})\subseteq\Sigma_\tau (k)$
of representatives of $\Gamma_{\tau}$-orbits.
\item For a simplex $\sigma\in\Sigma_{\tau}(k)$ denote by $m_{\tau}(\sigma)$
the number of $n-j-1$-simplices containing $\sigma$ in $X_{\tau}$.
Note that $m_{\tau}(\sigma)=m(\tau\sigma)$ and by our previous assumption,
$m_{\tau}(\sigma)\geq1$ for every $\sigma$. 
\item For a simplex $\sigma\in\Sigma_{\tau}(k)$, denote by $\Gamma_{\tau\sigma}$
the stabilizer of $\sigma$ in $\Gamma_{\tau}$. 
\item For $0\leq k\leq n-j-1$, denote by $C^{k}(X_{\tau},\rho_{\tau})$
the space of simplicial $k$-cochains of $X_{\tau}$ which are twisted
by $\rho_{\tau}$.
\end{enumerate}
We also define $L^{k}(X_{\tau},\rho_{\tau}),d_{\tau},\delta_{\tau},\Delta_{\tau}^{+},\Delta_{\tau}^{-},\Delta_{\tau}$
and the $L^{2}$-cohomology as before. Note that since $X_{\tau}$
is finite we get that $L^{k}(X_{\tau},\rho_{\tau})=C^{k}(X_{\tau},\rho_{\tau})$.

Define the \emph{localization map} \[
C^{k}(X,\rho)\rightarrow C^{k-j-1}(X_{\tau},\rho_{\tau}),\;\phi\rightarrow\phi_{\tau}\]

Where $\phi_{\tau}(\sigma)=\phi(\tau\sigma)$.

\subsection{Further results from \cite{BS}, \cite{DJ1}}

\begin{proposition}
\label{resultfrombs}
In \cite{BS},\cite{DJ1} following results were proven:
\begin{enumerate}

\item \cite[Lemma 1.3]{BS}, \cite[Lemma 3.3]{DJ1} For $0\leq l<k\leq n$, let $f=f(\tau,\sigma)$ be
a $\Gamma$-invariant function on the set of pairs $\left(\tau,\sigma\right)$,
where $\tau$ is an ordered $l$-simplex and $\sigma$ is an ordered
$k$-simplex with $\tau\subset\sigma$ Then \[
\sum_{\sigma\in\Sigma(k,\Gamma)}\sum_{\begin{array}{c}
{\scriptstyle \tau\in\Sigma(l)}\\
{\scriptstyle \tau\subset\sigma}\end{array}}\frac{f(\tau,\sigma)}{\left|\Gamma_{\sigma}\right|}=\sum_{\tau\in\Sigma(l,\Gamma)}\sum_{\begin{array}{c}
{\scriptstyle \sigma\in\Sigma(k)}\\
{\scriptstyle \tau\subset\sigma}\end{array}}\frac{f(\tau,\sigma)}{\left|\Gamma_{\tau}\right|}\]

\item \cite[equality (1.9)]{BS} For $\psi\in C^{l}(X_{\tau},\rho_{\tau})$ we
have \[
\left\Vert \psi\right\Vert ^{2}=\sum_{\eta\in\Sigma_{\tau}(l,\Gamma_{\tau})}\frac{m_{\tau}(\eta)}{\left|\Gamma_{\tau\eta}\right|(l+1)!}\left|\psi(\eta)\right|^{2}=\frac{1}{(l+1)!\left|\Gamma_{\tau}\right|}\sum_{\eta\in\Sigma_{\tau}(l)}m(\tau\eta)\left|\psi(\eta)\right|^{2}\]

\item \cite[Lemma 1.11]{BS} For $0\leq k\leq n$, $\tau\in\Sigma(k-1)$ and
$\phi\in L^{k}(X,\rho)$, denote by $\phi_{\tau}^{0}$ the component
of $\phi_{\tau}$ in the subspace of constant maps in $C^{0}(X_{\tau},\rho_{\tau})$
then \[
\left\Vert \phi_{\tau}^{0}\right\Vert ^{2}=\frac{m(\tau)}{(n-k+1)\left|\Gamma_{\tau}\right|}\left|\delta\phi(\tau)\right|^{2}\]
and therefore
$$ \Vert \delta \phi \Vert^2 = \sum_{\tau\in\Sigma(k-1,\Gamma)} \frac{n-k+1}{k!} \left\Vert \phi_{\tau}^{0}\right\Vert ^{2}$$

\item \cite[Theorem 1.12]{BS} Let $0 \leq j < k <n$ and $\phi \in L^k (X, \rho)$ then 
$$ k! (\Vert d \phi \Vert^2 - (n-k) \Vert \phi \Vert^2 )= (k-j-1)! \sum_{\tau \in \Sigma (j, \Gamma)} \Vert d_\tau \phi_\tau \Vert^2 - (n-k) \Vert \phi_\tau \Vert^2$$
\cite[Corollary 1.13]{BS} For $0<k<n$ and $\tau\in\Sigma(k-1)$ define
a quadratic form $Q_{\tau}$ on $C^{0}(X_{\tau},\rho_{\tau})$ by
\[
Q_{\tau}(\psi)=\left\Vert d_{\tau}\psi\right\Vert ^{2}-\frac{k}{k+1}(n-k)\left\Vert \psi\right\Vert ^{2}=\left\langle \Delta_{\tau}^{+}\psi,\psi\right\rangle -\frac{k}{k+1}(n-k)\left\Vert \psi\right\Vert ^{2}\]
Then for every $\phi\in L^{k}(X,\rho)$ we have \[
k!\left\Vert d\phi\right\Vert ^{2}=\sum_{\tau\in\Sigma(k-1,\Gamma)}Q_{\tau}(\phi_{\tau})\]

\end{enumerate}
\end{proposition}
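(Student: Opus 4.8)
Items (1)--(4) are precisely the cited lemmas of \cite{BS} (with (1) also recorded, in the locally compact unimodular generality, in \cite{DJ1}), so the plan is to recall their proofs; all of them rest on the same three ingredients: an orbit--stabilizer ("Fubini") identity for the properly discontinuous action, the explicit formulas for $\delta$ (\cite[Proposition 1.6]{BS}) and $\delta d$ (\cite[Corollary 1.7]{BS}) recalled above, and the transitivity $(\phi_\tau)_\eta=\phi_{\tau\eta}$ of localization together with $(X_\tau)_\eta=X_{\tau\eta}$. For (1) I would introduce the $\Gamma$-set $P$ of pairs $(\tau,\sigma)$ with $\tau\in\Sigma(l)$, $\sigma\in\Sigma(k)$ and $\tau\subset\sigma$; the stabilizer of an \emph{ordered} simplex fixes each of its vertices, so $\Gamma_{(\tau,\sigma)}=\Gamma_\sigma$ whenever $\tau\subset\sigma$. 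Choosing orbit representatives and applying orbit--stabilizer with the Haar measure (legitimate since the stabilizers are compact open of finite measure), both sides of the identity are seen to equal $\sum_{[(\tau,\sigma)]\in P/\Gamma}f(\tau,\sigma)/|\Gamma_\sigma|$ --- the left side organizing $P/\Gamma$ by the orbit of the $k$-simplex, the right side by the orbit of the $l$-simplex, with the inner sums over sub- resp.\ supersimplices producing exactly one representative of each $P$-orbit with the correct weight. Item (2) is then the special case of this bookkeeping inside the \emph{finite} complex $X_\tau$ under the \emph{compact} group $\Gamma_\tau$, applied to the $\Gamma_\tau$-invariant function $\eta\mapsto m(\tau\eta)\,|\psi(\eta)|^2$ (invariant because $\psi$ is equivariant and $m$ is $\Gamma$-invariant); its first equality is just the definition of the norm on $L^l(X_\tau,\rho_\tau)$ together with $m_\tau(\eta)=m(\tau\eta)$.

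For (3), fix $\tau\in\Sigma(k-1)$. Unwinding the formula for $\delta$ gives $\delta\phi(\tau)=(-1)^{k}\,m(\tau)^{-1}\sum_{w}m_\tau(w)\,\phi_\tau(w)$, the sum running over the vertices of $X_\tau$; since each $n$-simplex through $\tau$ has exactly $n-k+1$ vertices outside $\tau$, one has $\sum_w m_\tau(w)=(n-k+1)\,m(\tau)$. Consequently, with respect to the inner product of (2) for $l=0$, the orthogonal projection $\phi_\tau^0$ of $\phi_\tau$ onto the constant maps is the constant map with value the $m_\tau$-weighted average $\frac1{(n-k+1)m(\tau)}\sum_w m_\tau(w)\,\phi_\tau(w)$ (which automatically lies in $H^{\Gamma_\tau}$, so it indeed lies in that subspace). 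Computing $\|\phi_\tau^0\|^2$ with (2) and substituting $\sum_w m_\tau(w)\phi_\tau(w)=(-1)^{k}m(\tau)\,\delta\phi(\tau)$ yields $\|\phi_\tau^0\|^2=\frac{m(\tau)}{(n-k+1)|\Gamma_\tau|}\,|\delta\phi(\tau)|^2$; inserting this into the definition of $\|\delta\phi\|^2$ as a sum over $\Sigma(k-1,\Gamma)$ gives the displayed global formula.

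For (4) the plan is base case plus localization. First prove the case $j=k-1$: expand $\|d\phi\|^2=\langle\delta d\phi,\phi\rangle$ using the formula for $\delta d\phi$ recalled above, so that the diagonal term contributes $(n-k)\|\phi\|^2$ and the remaining double sum over incident vertices and $k$-faces, re-indexed via the Fubini identity (1) as a sum over $\mu\in\Sigma(k-1,\Gamma)$, is recognized --- the $1$-cochain $d_\mu\phi_\mu$ on the $1$-skeleton of $X_\mu$ being assembled from precisely the terms $\phi(v\sigma_i)$ --- as $\frac1{k!}\sum_\mu\bigl(\|d_\mu\phi_\mu\|^2-(n-k)\|\phi_\mu\|^2\bigr)$. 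For general $j<k-1$, apply this base case once in $X$ and once in each link $X_\tau$, $\tau\in\Sigma(j,\Gamma)$, to $\phi_\tau$; the crucial observation is that $\dim X_\tau-(k-j-1)=n-k$, so the "defect" is unchanged and the base case in $X_\tau$ reads $(k-j-1)!\bigl(\|d_\tau\phi_\tau\|^2-(n-k)\|\phi_\tau\|^2\bigr)=\sum_{\eta\in\Sigma_\tau(k-j-2,\Gamma_\tau)}\bigl(\|d_{\tau\eta}\phi_{\tau\eta}\|^2-(n-k)\|\phi_{\tau\eta}\|^2\bigr)$ after using $(\phi_\tau)_\eta=\phi_{\tau\eta}$. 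Summing over $\tau$ and using that $(\tau,\eta)\mapsto\tau\eta$ is a $\Gamma$-equivariant bijection onto $\Sigma(k-1)$ --- so that $\sum_{\tau\in\Sigma(j,\Gamma)}\sum_{\eta\in\Sigma_\tau(k-j-2,\Gamma_\tau)}$ collapses to $\sum_{\mu\in\Sigma(k-1,\Gamma)}$ --- identifies the two sides. Finally, Corollary 1.13 follows from the $j=k-1$ case by rewriting $(n-k)\|\phi\|^2$ through the auxiliary identity $(k+1)!\,\|\phi\|^2=\sum_{\tau\in\Sigma(k-1,\Gamma)}\|\phi_\tau\|^2$ (itself a short consequence of (1) and (2), via counting $(k+1)!$ orderings of the vertex sets), while the alternative description of $Q_\tau$ is just $\|d_\tau\psi\|^2=\langle\Delta_\tau^+\psi,\psi\rangle$.

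The only genuine difficulty anywhere is combinatorial bookkeeping: keeping track of the alternating signs of the cochains and of the factorial normalizations $(k+1)!$, $k!$, $(k-j-1)!$ so that the cross terms match on the nose. No idea beyond the orbit counting of (1) and the explicit $\delta d$ formula is needed, and it is step (4) --- the reorganization of the cross terms and the induction on $j$ --- where that care is concentrated.
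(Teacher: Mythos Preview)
Your plan is correct and is essentially the argument of \cite{BS}: the orbit--stabilizer bookkeeping for (1), its specialization to the finite link for (2), the weighted-average identification of $\phi_\tau^0$ via the explicit $\delta$-formula for (3), and the expansion of $\langle\delta d\phi,\phi\rangle$ reorganized through (1) for the base case of (4), with localization handling general $j$. Note that the paper itself offers no proof here---Proposition~\ref{resultfrombs} is stated purely as a citation of results already established in \cite{BS} and \cite{DJ1}---so there is nothing to compare against beyond the original sources, whose arguments you have accurately reconstructed.
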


\subsection{Vanishing of $L^{2}$-cohomologies}

The idea (taken from \cite{BS}) for proving that $L^2 H^k (X, \rho)=0$ for every $\rho$ goes as follows: prove that there is an $\varepsilon >0$ such that:
$$ \forall \phi \in L^k (X, \rho), \Vert d \phi \Vert^2 = 0 \Rightarrow \Vert \delta \phi \Vert^2 \geq \varepsilon \Vert \phi \Vert^2$$
This will imply that $ker(d\mid L^{k}(X,\rho)) \cap ker (\delta\mid L^{k}(X,\rho)) = 0$ and therefore $ker(\Delta^+\mid L^{k}(X,\rho)) \cap ker (\Delta^-\mid L^{k}(X,\rho)) = 0$ so we get $L^{2}\mathcal{H}^{k}(X,\rho)=0$. Also, by the above inequality, we get that the image of $\Delta^{-}\mid L^{k}(X,\rho)$ is closed in
$ker(\Delta^{+}\mid L^{k}(X,\rho))$. Hence $ker(\Delta^{+}\mid L^{k}(X,\rho))=im(\Delta^{-}\mid L^{k}(X,\rho))$ and therefore $L^{2}H^{k}(X,\rho)=0$.     
In particular, one could prove the vanishing of $L^k (X,\rho)$ for every $\rho$ by showing that there is an $\varepsilon >0$ such that for every representation $\rho$ the following holds:
$$ \forall \phi \in  L^k (X, \rho), \Vert d \phi \Vert^2 + \Vert \delta \phi \Vert^2 \geq \varepsilon \Vert \phi \Vert^2$$
In the same manner, to prove that $L^2 \mathcal{H}^k (X, \rho)=0$ for every $\rho$, it is enough to prove:
$$ \forall \phi \in L^k (X, \rho), \Vert \delta \phi \Vert^2 = 0 \Rightarrow \Vert d \phi \Vert^2 > 0$$

\section{Criteria via Laplacian eigenvalues}

In this section we shall find criteria for the vanishing of $L^2$-cohomologies using the eigenvalues for the Laplacians.

\begin{definition}
For $0 \leq k < l$ integers define $M_{k}^l$ as following: let $\gamma$ be an ordered simplex of dimension $l$  denote $F=\lbrace \sigma \sqsubset \gamma : dim(\sigma) =k \rbrace, F'=\lbrace \nu \sqsubset \gamma : dim( \nu )=k+1 \rbrace$. Denote by $[\nu:\sigma ]$ the sign of $\sigma$ in $\nu$, i.e. if $\nu = (v_0,...,v_{k+1}) , \sigma = (v_0,..., \hat{v_i},...,v_{k+1})$, then $[\nu : \sigma ]=(-1)^i$. Now $M_{k}^l$ is a matrix indexed by $F \cup F'$ defined as
$$  (M_{k}^l)_{\alpha, \beta } = \begin{cases}
x_\alpha & \alpha=\beta \in F \\
[\beta:\alpha ] & \alpha \in F, \beta \in F', \alpha \sqsubset \beta \\
[\alpha : \beta ] & \beta \in F, \alpha \in F', \beta \sqsubset \alpha \\
0 & otherwise
\end{cases}  $$
Define $p_k^l (x_{\sigma})$ to be the multi polynomial with variables indexed by $F$ as 
$$p_k^l (x_\sigma)=det (M_k^l)$$
\end{definition}     

\begin{example} 
For $l=k+1$, 
$$p_k^l (x_{\sigma}) = \sum_{\sigma \in F} \prod_{\sigma' \in F, \sigma' \neq D} x_{\sigma'}$$
notice that in this case we get 
$$ p_k^l (x_{\sigma}) = \nabla \cdot (\prod_{\sigma' \in F} x_{\sigma'})$$ 
\end{example}

\begin{remark}
Notice that if $2(k+1) < l+1 $ then 
$$\vert F \vert = \left( \begin{array}{c} l+1 \\ k+1 \end{array} \right) \leq \left( \begin{array}{c} l+1 \\ k+2 \end{array} \right) = \vert F' \vert$$ 
and therefore $p_k^l$ is constant (if $2(k+1) < l $ then $p_k^l \equiv 0$).
\end{remark}

For every simplex $\tau$ of dimension $k-1$ denote by $\lambda_\tau$ the smallest positive eigenvalue of $\Delta^+_\tau$ on $X_\tau$ (the link of $\tau$) also denote $\overline{\lambda_\tau} = \lambda_\tau - \frac{k(n-k)}{k+1} $. Now for a simplex $\sigma$ of dimension $k$ denote 
$$S_\sigma =  \sum_{\tau \in \Sigma(k-1), \tau \sqsubset \sigma} \overline{\lambda_\tau} $$ 

\begin{theorem}
If there is an $l>k$ and $\varepsilon >0$ such that for any $\gamma \in \Sigma(l,\Gamma)$ one has
$$p_k^l (\lambda-S_\sigma) = 0 \Rightarrow \lambda \geq \varepsilon$$
where $p_k^l (\lambda-S_\sigma)$ is the polynomial obtained by placing $x_{\sigma} = \lambda - S_\sigma$,
then $L^2 H^k(X,\rho)=0$ for every $\rho$. 
\end{theorem}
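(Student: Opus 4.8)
The plan is to verify the criterion of \cite{BS} recalled just above: it suffices to produce one constant $\varepsilon'>0$, independent of $\rho$, such that every $\phi\in L^k(X,\rho)$ with $d\phi=0$ satisfies $\Vert\delta\phi\Vert^2\ge\varepsilon'\Vert\phi\Vert^2$; this yields $\ker(d\mid L^k(X,\rho))\cap\ker(\delta\mid L^k(X,\rho))=0$ together with closedness of $\operatorname{im}(\delta\mid L^{k+1}(X,\rho))$, hence $L^2H^k(X,\rho)=0$. Fix $\rho$ and such a $\phi$ from now on.

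The conceptual heart is a reinterpretation of the hypothesis on $p_k^l$. Writing $F$ and $F'$ for the sets of $k$-faces and $(k+1)$-faces of an $l$-simplex $\gamma$, the matrix $M_k^l$ has the block form in which the $F\times F$ block is $\operatorname{diag}(x_\sigma)$, the $F\times F'$ block is the incidence matrix $N$ of $\gamma$ between $k$- and $(k+1)$-faces (entries $[\nu:\sigma]$), the $F'\times F$ block is $N^{\mathrm t}$, and the $F'\times F'$ block is zero: a saddle--point matrix. If $N$ had a nonzero kernel vector $v$, then $(0,v)$ would lie in $\ker M_k^l$ for every value of the $x_\sigma$, forcing $p_k^l\equiv0$, in which case ``$p_k^l(\lambda-S_\sigma)=0$'' would hold for all $\lambda$ and the hypothesis would fail; hence $N$ is injective. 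For injective $N$ the standard inertia identity for saddle--point matrices shows that $\det M_k^l\big|_{x_\sigma=\lambda-S_\sigma}$ vanishes exactly when the quadratic form $a\mapsto\sum_\sigma(\lambda-S_\sigma)|a_\sigma|^2$ is degenerate on $\ker N^{\mathrm t}$. Since $(N^{\mathrm t}a)_\nu=\sum_{\sigma\sqsubset\nu}[\nu:\sigma]a_\sigma$ is precisely the simplicial coboundary evaluated at the $(k+1)$-face $\nu$, the space $\ker N^{\mathrm t}$ is the space $Z^k(\gamma)$ of simplicial $k$-cocycles of the simplex $\gamma$; thus the zeros of the one--variable polynomial $p_k^l(\lambda-S_\sigma)$ are exactly the eigenvalues of the form $a\mapsto\sum_\sigma S_\sigma|a_\sigma|^2$ on $Z^k(\gamma)$ relative to $a\mapsto\sum_\sigma|a_\sigma|^2$. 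Therefore the hypothesis of the theorem is equivalent to the statement: for every $\gamma\in\Sigma(l,\Gamma)$ and every $k$-cocycle $a=(a_\sigma)_{\sigma\sqsubset\gamma}$ of $\gamma$,
\[
\sum_{\sigma\sqsubset\gamma,\ \dim\sigma=k}S_\sigma|a_\sigma|^2\ \ge\ \varepsilon\sum_{\sigma\sqsubset\gamma,\ \dim\sigma=k}|a_\sigma|^2 .
\]
(For $n=2$, $k=1$, $l=2$ one recovers from this exactly the two displayed inequalities of Theorem 1.)

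With this in hand the proof proceeds in two steps, both exploiting $d\phi=0$. Step one: for every $(k+1)$-simplex $\nu$ of $X$ we have $\sum_{\sigma\sqsubset\nu}[\nu:\sigma]\phi(\sigma)=d\phi(\nu)=0$, so for every $l$-simplex $\gamma$ of $X$ the tuple $(\phi(\sigma))_{\sigma\sqsubset\gamma}$ is a $k$-cocycle of $\gamma$, and the reformulated hypothesis gives $\sum_{\sigma\sqsubset\gamma}S_\sigma|\phi(\sigma)|^2\ge\varepsilon\sum_{\sigma\sqsubset\gamma}|\phi(\sigma)|^2$. One sums these inequalities over a set of orbit representatives $\gamma\in\Sigma(l,\Gamma)$, weighted by $m(\gamma)/(|\Gamma_\gamma|\binom{n-k}{l-k})$; this weight is chosen so that the double--counting identity \cite[Lemma 1.3]{BS} collapses both sides onto the $k$-simplices, carrying exactly the weights of $\Vert\cdot\Vert^2$. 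Combined with the definition of $S_\sigma$ and with $\Vert\phi_\tau\Vert^2=\frac{1}{|\Gamma_\tau|}\sum_\eta m(\tau\eta)|\phi(\tau\eta)|^2$ (the link-dimension-$0$ case of \cite[equality (1.9)]{BS}), this produces a constant $c_0>0$, depending only on $k$, with $\sum_{\tau\in\Sigma(k-1,\Gamma)}\overline{\lambda_\tau}\,\Vert\phi_\tau\Vert^2\ge c_0\,\varepsilon\,\Vert\phi\Vert^2$. Step two: \cite[Corollary 1.13]{BS} together with $d\phi=0$ gives $\sum_\tau\langle\Delta_\tau^+\phi_\tau,\phi_\tau\rangle=\frac{k(n-k)}{k+1}\sum_\tau\Vert\phi_\tau\Vert^2$; using the link spectral estimate $\langle\Delta_\tau^+\phi_\tau,\phi_\tau\rangle\ge\lambda_\tau(\Vert\phi_\tau\Vert^2-\Vert\phi_\tau^0\Vert^2)$ (valid since each $X_\tau$ is connected and $\Delta_\tau^+$ on $C^0(X_\tau,\rho_\tau)$ is a restriction of the graph Laplacian of $X_\tau$ tensored with the identity of $H$), the uniform bound $\lambda_\tau\le2$, and the identity $\sum_\tau\Vert\phi_\tau^0\Vert^2=\frac{k!}{n-k+1}\Vert\delta\phi\Vert^2$ of \cite[Lemma 1.11]{BS}, one gets $\sum_\tau\overline{\lambda_\tau}\,\Vert\phi_\tau\Vert^2\le\frac{2k!}{n-k+1}\Vert\delta\phi\Vert^2$. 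Combining the two steps yields $\Vert\delta\phi\Vert^2\ge\varepsilon'\Vert\phi\Vert^2$ with $\varepsilon'=\frac{(n-k+1)c_0}{2k!}\varepsilon>0$, independent of $\rho$, as required.

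The step I expect to be the main obstacle is the linear algebra of the second paragraph: identifying $M_k^l$ as a saddle--point matrix, keeping track of the signs $[\nu:\sigma]$ so that $\ker N^{\mathrm t}$ really is the cocycle space of the combinatorial simplex $\gamma$, and invoking the correct inertia/determinant statement so that the zero set of $p_k^l(\lambda-S_\sigma)$ coincides with the spectrum of $\operatorname{diag}(S_\sigma)$ on $Z^k(\gamma)$ — including a clean treatment of the degenerate ranges of $l$, where $N$ fails to be injective, $p_k^l$ is constant or identically zero, and the hypothesis is simply unsatisfiable (so that in effect only $l=k+1$ is of interest). The combinatorial normalizations of the third paragraph — choosing the weights on $l$-simplices so that \cite[Lemma 1.3]{BS} collapses the local cocycle inequalities onto the global $S_\sigma$-weighted norm with precisely the constants occurring in $\Vert\phi\Vert^2$ — are routine but must be carried out carefully.
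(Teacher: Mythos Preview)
Your outline is correct and follows the paper's proof in all its global architecture: the paper also (i) starts from \cite[Corollary 1.13]{BS} and the link spectral estimate to obtain, for $d\phi=0$, the bound $\sum_{\tau}\overline{\lambda_\tau}\Vert\phi_\tau\Vert^2\le\frac{2(n-k)k!}{n-k+1}\Vert\delta\phi\Vert^2$ (your Step~2), (ii) rewrites $\sum_\tau\overline{\lambda_\tau}\Vert\phi_\tau\Vert^2$ as $k!\sum_\sigma\frac{m(\sigma)}{(k+1)!|\Gamma_\sigma|}S_\sigma|\phi(\sigma)|^2$ and then passes to $l$-simplices via \cite[Lemma 1.3]{BS} (your Step~1), and (iii) reduces the resulting local inequality to real cochains by a tensor-with-$H$ lemma. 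The only genuine difference is in how the local inequality on a single $l$-simplex $\gamma$ is extracted from the hypothesis on $p_k^l$: the paper sets up the Lagrange-multiplier problem for minimizing $\sum_\sigma S_\sigma x_\sigma^2$ on the unit sphere of $\ker N^{\mathrm t}$ and observes that the critical-point system is literally $M_k^l(x,\mu)^{\mathrm t}=0$, so the minimum is among the roots of $p_k^l(\lambda-S_\sigma)$; you instead recognise $M_k^l$ as a saddle-point matrix and read off directly that (for $N$ injective) the zeros of $p_k^l(\lambda-S_\sigma)$ are the eigenvalues of $\operatorname{diag}(S_\sigma)$ compressed to $Z^k(\gamma)$. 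These are two phrasings of the same linear-algebraic fact; your phrasing has the bonus of explaining cleanly why $l>k+1$ is vacuous ($\partial_{k+1}$ on the $l$-simplex has $\operatorname{im}\partial_{k+2}\ne0$ in its kernel, forcing $p_k^l\equiv0$), which is sharper than the paper's Remark based only on the inequality $|F|\le|F'|$.

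One small correction to your Step~2: with the paper's weights one has $\Delta_\tau^+f(u)=(n-k)f(u)-\sum_v\frac{m_\tau((u,v))}{m_\tau(u)}f(v)$, i.e.\ $\Delta_\tau^+$ is $(n-k)$ times the normalized Laplacian of the weighted $1$-skeleton of $X_\tau$, so the correct uniform bound is $\lambda_\tau\le2(n-k)$ (as the paper uses), not $\lambda_\tau\le2$. This only rescales your final constant $\varepsilon'$ by a factor of $n-k$ and does not affect the argument.
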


\begin{proof}
We shall start with repeating the proof in \cite{BS}: \\
Let $\phi \in L^k(X,\rho)$ then by Proposition \ref{resultfrombs} (4), one has
$$k!\left\Vert d\phi\right\Vert ^{2} = \sum_{\tau\in\Sigma(k-1,\Gamma)} \langle \Delta^+_\tau (\phi_{\tau}),\phi_{\tau} \rangle - \frac{k}{k+1}(n-k)\left\Vert \phi_\tau \right\Vert^{2}$$
Now denote as before $\phi_{\tau}^0$ to be the projection of $\phi_{\tau}$ on the space of constant function on $X_\tau$ and by $\phi_{\tau}^1$ its orthogonal complement. Note that since $ker(\Delta^+_\tau)$ is that space of constant function on $X_\tau$ (since $X_\tau$ is connected), we get that 
$$\langle \Delta^+_\tau (\phi_{\tau}),\phi_{\tau} \rangle = \langle \Delta^+_\tau (\phi_{\tau}^1),\phi_{\tau}^1 \rangle \geq \lambda_\tau \Vert \phi_{\tau}^1 \Vert^2$$  
Therefore
$$k!\left\Vert d\phi\right\Vert ^{2} \geq \sum_{\tau\in\Sigma(k-1,\Gamma)} \lambda_\tau \Vert \phi_{\tau}^1 \Vert^2 -  \frac{k}{k+1}(n-k)\left\Vert \phi_\tau \right\Vert^{2} = $$
$$ = \sum_{\tau\in\Sigma(k-1,\Gamma)} \lambda_\tau \Vert \phi_{\tau} \Vert^2 - \lambda_\tau \Vert \phi_{\tau}^0 \Vert^2 -  \frac{k}{k+1}(n-k)\left\Vert \phi_\tau \right\Vert^{2}$$
So we get 
$$ k!\left\Vert d\phi\right\Vert^{2} + \sum_{\tau\in\Sigma(k-1,\Gamma)} \lambda_\tau \Vert \phi_{\tau}^0 \Vert^2 \geq \sum_{\tau\in\Sigma(k-1,\Gamma)} \overline{\lambda_\tau}  \left\Vert \phi_\tau \right\Vert^{2}$$   
Note that since $\Vert d_\tau \Vert^2 \leq 2(n-k)$ for every $\tau$ (see \cite[Proposition 1.5]{BS}) then $\lambda_\tau \leq 2(n-k)$ for every $\tau$ and therefore 
$$k! \frac{2(n-k)}{n-k+1} \Vert \delta \phi \Vert^2 = \sum_{\tau\in\Sigma(k-1,\Gamma)} 2(n-k) \Vert \phi_{\tau}^0 \Vert^2 \geq \sum_{\tau\in\Sigma(k-1,\Gamma)} \lambda_\tau \Vert \phi_{\tau}^0 \Vert^2$$
So we get 
$$ k!\left\Vert d\phi\right\Vert^{2} + k! \frac{2(n-k)}{n-k+1} \Vert \delta \phi \Vert^2 \geq \sum_{\tau\in\Sigma(k-1,\Gamma)} \overline{\lambda_\tau}  \left\Vert \phi_\tau \right\Vert^{2}$$

\begin{remark}
At this stage, if we wanted to prove the result in \cite{BS} (every $\overline{\lambda_\tau} \geq \varepsilon$ implies the vanishing of the cohomology) we would be done, because then
$$ k!\left\Vert d\phi\right\Vert^{2} + k! \frac{2(n-k)}{n-k+1} \Vert \delta \phi \Vert^2 \geq \varepsilon (k+1)! \Vert \phi \Vert^2$$
\end{remark}

Now we shall assume that $\phi \in ker(d)$ and show that under the condition stated in the theorem we get that 
$$  \frac{2(n-k)}{n-k+1} \Vert \delta \phi \Vert^2 \geq \varepsilon \Vert \phi \Vert^2$$
and that will finish the proof at stated at the beginning of this section.

By definition and Proposition \ref{resultfrombs} (2) we get that 
$$ \sum_{\tau\in\Sigma(k-1,\Gamma)} \overline{\lambda_\tau}  \left\Vert \phi_\tau \right\Vert^{2} = \sum_{\tau\in\Sigma(k-1,\Gamma)} \overline{\lambda_\tau} \frac{1}{\vert \Gamma_\tau \vert} \sum_{\eta \in\Sigma_{\tau}(0)}m(\tau\eta)\left|\phi(\tau \eta)\right|^{2} = $$
$$ = \sum_{\tau\in\Sigma(k-1,\Gamma)} \overline{\lambda_\tau} \frac{1}{\vert \Gamma_\tau \vert} \sum_{\sigma  \in\Sigma (k),\tau \subset \sigma } \frac{1}{(k+1)!} m(\sigma)\left|\phi(\sigma)\right|^{2}$$
by Proposition \ref{resultfrombs} (1) we can change the order of summation and get
$$\sum_{\sigma \in\Sigma(k,\Gamma)} \frac{m(\sigma) \vert \phi(\sigma) \vert^{2}}{(k+1)! \vert \Gamma_\sigma \vert} \sum_{\tau  \in\Sigma (k-1),\tau \subset \sigma } \overline{\lambda_\tau} = k! \sum_{\sigma \in\Sigma(k,\Gamma)} \frac{m(\sigma) \vert \phi(\sigma) \vert^{2}}{(k+1)! \vert \Gamma_\sigma \vert} S_\sigma $$
Now note that 
$$(l+1)! \left( \begin{array}{c}  n-k \\ l-k  \end{array} \right) m(\sigma) = \sum_{\gamma \in \Sigma (l), \sigma \subset \gamma} m(\gamma) $$  
so we get
$$ k! \sum_{\sigma \in\Sigma(k,\Gamma)} \frac{m(\sigma) \vert \phi(\sigma) \vert^{2}}{(k+1)! \vert \Gamma_\sigma \vert} S_\sigma = k! \sum_{\sigma \in\Sigma(k,\Gamma)} \frac{\vert \phi(\sigma) \vert^{2}}{(k+1)! \vert \Gamma_\sigma \vert} S_\sigma \sum_{\gamma \in \Sigma (l), \sigma \subset \gamma} \frac{m(\gamma )}{(l+1)! \left( \begin{array}{c}  n-k \\ l-k  \end{array} \right) }$$
and by changing the order of summation again we get
$$\frac{k!}{(k+1)!} \sum_{\gamma \in\Sigma(l,\Gamma)} \frac{m(\gamma )}{(l+1)! \left( \begin{array}{c}  n-k \\ l-k  \end{array} \right)  \vert \Gamma_\gamma \vert} \sum_{\sigma \in \Sigma (k), \sigma \subset \gamma} S_\sigma \vert \phi (\sigma) \vert^2 $$

Now we shall show that, under the conditions of the theorem, for every $\gamma \in\Sigma(l,\Gamma)$ we have the the following inequality 
$$\sum_{\sigma \in \Sigma (k), \sigma \subset \gamma} S_\sigma \vert \phi (\sigma) \vert^2 \geq \varepsilon   \sum_{\sigma \in \Sigma (k), \sigma \subset \gamma} \vert \phi (\sigma) \vert^2$$
and that will complete the proof because then we have 
$$\frac{k!}{(k+1)!} \sum_{\gamma \in\Sigma(l,\Gamma)} \frac{m(\gamma )}{(l+1)! \left( \begin{array}{c}  n-k \\ l-k  \end{array} \right)  \vert \Gamma_\gamma \vert} \sum_{\sigma \in \Sigma (k), \sigma \subset \gamma} S_\sigma \vert \phi (\sigma) \vert^2 \geq $$

$$\geq \frac{k!}{(k+1)!} \sum_{\gamma \in\Sigma(l,\Gamma)} \frac{m(\gamma )}{(l+1)! \left( \begin{array}{c}  n-k \\ l-k  \end{array} \right)  \vert \Gamma_\gamma \vert} \sum_{\sigma \in \Sigma (k), \sigma \subset \gamma} \varepsilon \vert \phi (\sigma) \vert^2 = $$

$$= \varepsilon \frac{k!}{(k+1)!} \sum_{\sigma \in\Sigma(k,\Gamma)}  \frac{\vert \phi (\sigma) \vert^2 }{\vert \Gamma_\sigma \vert} \sum_{\gamma \in \Sigma (l), \sigma \subset \gamma}  \frac{m(\gamma )}{(l+1)! \left( \begin{array}{c}  n-k \\ l-k  \end{array} \right)} = $$

$$= \varepsilon k! \sum_{\sigma \in\Sigma(k,\Gamma)}  \frac{ m( \sigma) \vert \phi (\sigma) \vert^2 }{(k+1)! \vert \Gamma_\sigma \vert} = \varepsilon k! \Vert \phi \Vert^2 $$

And therefore we get
$$ \frac{2(n-k)}{n-k+1} \Vert \delta \phi \Vert^2 \geq \varepsilon \Vert \phi \Vert^2$$

So we are left with proving the following inequality - for every $\phi \in L^k(X,\rho) \cap ker(d)$ and for every $\gamma \in \Sigma (l,\Gamma)$ one has (under the conditions of the theorem):
$$\sum_{\sigma \in \Sigma (k), \sigma \subset \gamma} S_\sigma \vert \phi (\sigma) \vert^2 \geq \varepsilon   \sum_{\sigma \in \Sigma (k), \sigma \subset \gamma} \vert \phi (\sigma) \vert^2$$
Fix $\gamma \in \Sigma (l,\Gamma)$, first note that since $\phi$ is alternating we get that 
$$\sum_{\sigma \in \Sigma (k), \sigma \subset \gamma} S_\sigma \vert \phi (\sigma) \vert^2 = (k+1)! \sum_{\sigma \in \Sigma (k), \sigma \sqsubset \gamma} S_\sigma \vert \phi (\sigma) \vert^2$$
and
$$\sum_{\sigma \in \Sigma (k), \sigma \subset \gamma} \vert \phi (\sigma) \vert^2 = (k+1)! \sum_{\sigma \in \Sigma (k), \sigma \sqsubset \gamma} \vert \phi (\sigma) \vert^2$$ 
therefore it is enough to prove that 
$$\sum_{\sigma \in \Sigma (k), \sigma \sqsubset \gamma} S_\sigma \vert \phi (\sigma) \vert^2 \geq \varepsilon \sum_{\sigma \in \Sigma (k), \sigma \sqsubset \gamma} \vert \phi (\sigma) \vert^2$$ 

Now we shall need the following simple but useful lemma, which is a straightforward generalization of \cite[Lemma 2.3]{BS}. 

\begin{lemma}
For a finite set $F=\lbrace x_1,...,x_n \rbrace$, denote by $C(F,H)$ the space of maps from $F$ to $H$. 
On that space there is a natural inner product:
$$\langle \phi , \psi \rangle = \sum_{k=1}^n \langle \phi (x_k) , \psi (x_k) \rangle$$ 
and denote by $\Vert . \Vert$ the norm induced by this inner product.
Note that $C(F,H)=C(F,\mathbb{R}) \otimes H$ (as a Hilbert space). 
Let $T_j, j=1,...,m$ and $M_i, i=1,...,l$ be bounded linear operators on $C(F,H)$ so there are operators $T_{j,\mathbb{R}},M_{i,\mathbb{R}}$ on $C(F,\mathbb{R})$ such that $T_j=T_{j,\mathbb{R}} \otimes id, M_i=M_{i,\mathbb{R}} \otimes id$. 
Let $a_1,...,a_j \in \mathbb{R}$ be constants, then if there is an $\varepsilon$ such that 
$$\forall \phi \in C(F,\mathbb{R}), \phi \in \cap ker(M_{i,\mathbb{R}}) \Rightarrow \sum_j a_j \Vert T_{j,\mathbb{R}}\phi \Vert^2 \geq \varepsilon \Vert \phi \Vert^2$$ 
Then also
$$\forall \phi \in C(F,H), \phi \in \cap ker(M_i) \Rightarrow \sum_j a_j \Vert T_j \phi \Vert^2 \geq \varepsilon \Vert \phi \Vert^2$$    
 
\end{lemma}  

\begin{proof}
Choose $\lbrace v_\alpha \rbrace$ orthonormal basis of $H$. For every $\phi \in C(F,H)$ one has $\phi_\alpha \in C(X,\mathbb{R})$ such that $\phi = \sum \phi_\alpha \otimes v_\alpha$. Now note that $\phi \in \cap ker(M_i)$ iff $\forall \alpha, \phi_\alpha \in \cap ker(M_{i,\mathbb{R}})$, so we get 
$$\forall \phi \in \cap ker(M_i), \sum_j a_j \Vert T_j \phi \Vert^2 = \Vert \sum_j T_j \sum_\alpha \phi_\alpha \otimes v_\alpha \Vert^2 = \sum_\alpha \sum_j a_j \Vert T_{j,\mathbb{R}} \phi_\alpha \Vert^2 \geq $$
$$ \geq \sum_\alpha  \varepsilon \Vert \phi_\alpha \Vert^2 = \varepsilon \Vert \phi \Vert^2$$
\end{proof}

Now we can use the above lemma to reduce our problem to a much simpler one. Denote $F=\lbrace \sigma \in \Sigma (k) : \sigma \sqsubset \gamma \rbrace$ and $F' = \lbrace \nu \in \Sigma (k+1) : \nu \sqsubset \gamma \rbrace$ and for $\phi \in L^k (X,\rho) \cap ker(d)$ we can look at the restriction of $\phi$ to $F$. Note that $\phi \in ker(d)$ implies that for every $\nu \in \Sigma (k+1)$ we have $d\phi ( \nu)=0$ and in particular $ \forall \nu \in F', d \phi (\nu)=0 $. Therefore for every $\nu \in F'$, we define $M_\nu$ acting on $C(F,H)$ as $M_\nu \phi = \sum (-1)^i \phi (\nu_i)$. For every $\sigma \in F$ define $T_\sigma$ acting on $C(F,H)$ as the projection on the space spanned by the indicator function of $\sigma$. 
If we can prove that for every $\phi \in C(F,H)$ the following holds 
$$ \phi \in \cap ker (M_\nu) \Rightarrow \sum_{\sigma \sqsubset \gamma} S_\sigma \Vert T_\sigma \phi \Vert^2 \geq \varepsilon \Vert \phi \Vert^2$$
then in particular, for every $\phi \in C^k (X,\rho)$ we have 
$$\sum_{\sigma \in \Sigma (k), \sigma \sqsubset \gamma} S_\sigma \vert \phi (\sigma) \vert^2 \geq \varepsilon \sum_{\sigma \in \Sigma (k), \sigma \sqsubset \gamma} \vert \phi (\sigma) \vert^2$$     

By the above lemma, it is enough to prove that for $\phi \in C(F,\mathbb{R})$ we have 
$$ \phi \in \cap ker (M_\nu) \Rightarrow \sum_{\sigma \sqsubset \gamma} S_\sigma \Vert T_\sigma \phi \Vert^2 \geq \varepsilon \Vert \phi \Vert^2$$
and if we denote $\phi (\sigma) = x_\sigma$ we get the following problem: prove that 
$$ \sum_{\sigma \in \Sigma (k), \sigma \sqsubset \gamma} S_\sigma x_\sigma^2 \geq \varepsilon \sum_{\sigma \in \Sigma (k), \sigma \sqsubset \gamma} x_\sigma^2$$
under the constraints
$$\forall \nu \in F', \sum_{i=0}^{k+1} (-1)^i x_{\nu_i} =0$$
since both sides of the inequality we are trying to prove are quadratic, WLOG it is enough to prove that
$$ \sum_{\sigma \in \Sigma (k), \sigma \sqsubset \gamma} S_\sigma x_\sigma^2 \geq \varepsilon$$
under the constraints 
$$ \sum_{\sigma \in \Sigma (k), \sigma \sqsubset \gamma} x_\sigma^2 =1$$
$$\forall \nu \in F', \sum_{\sigma \in F, \sigma \sqsubset \nu} [\nu : \sigma] x_{\sigma} =0$$

This is a problem of finding a minimum of a function in $\mathbb{R}^{\vert F \vert}$ under constraints which define a compact set in $\mathbb{R}^{\vert F \vert}$ and so we can use the Lagrange multiplier theorem.
Define the Lagrange function to be
$$\Lambda (x_\sigma,\mu_\nu, \lambda) = \sum_{\sigma \in \Sigma (k), \sigma \sqsubset \gamma} S_\sigma x_\sigma^2 - 2 \sum_{\nu \in F'} \mu_\nu (\sum_{\sigma \in F, \sigma \sqsubset \nu} [\nu : \sigma ] x_{\sigma} - \lambda (\sum_{\sigma \in \Sigma (k), \sigma \sqsubset \gamma} x_\sigma^2 -1)$$
(the $2$ multiplying $\sum_{\nu \in F'}$ is added for convenience) 

So for every $\sigma \in F$ we get a equation by derivation of $\Lambda$ by $x_\sigma$: 
$$ 2S_\sigma x_\sigma - 2 \lambda x_\sigma - 2 \sum_{\nu \in F', \sigma \sqsubset \nu} [\nu : \sigma ] \mu_\nu = 0$$
if we multiply every such equation by $x_\sigma$ and add them up (over all $\sigma \in F$) we get
$$2 \sum_{\sigma \in \Sigma (k), \sigma \sqsubset \gamma} S_\sigma x_\sigma^2 - 2 \lambda (\sum_{\sigma \in \Sigma (k), \sigma \sqsubset \gamma} x_\sigma^2) - 2 \sum_{\nu \in F'} \mu_\nu \sum_{\sigma \in F, \sigma \sqsubset \nu} [\nu : \sigma ] x_{\sigma}=0$$
and by the equations coming from the constraints we get 
$$ \sum_{\sigma \in \Sigma (k), \sigma \sqsubset \gamma} S_\sigma x_\sigma^2 = \lambda$$
So the minimum is must some $\lambda$ which is a part of a vector $(x_\sigma,\mu_\nu,\lambda)$ which solves
$ \nabla \Lambda =0$.

Treat $\lambda$ as a parameter and consider the system of linear equations in $(x_\sigma,\mu_\nu)$
$$ (\lambda - S_\sigma ) x_\sigma +  \sum_{\nu \in F', \sigma \sqsubset \nu} [\nu : \sigma ] \mu_\nu = 0$$
$$ \sum_{\sigma \in F, \sigma \sqsubset \nu} [\nu : \sigma ] x_{\sigma} =0$$
Note that from  
$$ \sum_{\sigma \in \Sigma (k), \sigma \sqsubset \gamma} x_\sigma^2 =1$$
the minimum is obtain only if this system of equations have a non trivial solution, that is only if the determinant is zero, but this determinant is exactly $p_k^l (\lambda-S_\sigma)$ and by the conditions of the theorem we know that for every root $\lambda$ of this polynomial we have $\lambda \geq \varepsilon$ and so we are done.

\end{proof}

Now Theorem 1 is proven as a corollary:

\begin{corollary}
\label{T for 2d}
Let $\Gamma$ be a locally compact, properly discontinuous, unimodular group of automorphisms
of $X$ acting cocompactly on $X$ such that $X$ is a locally finite contractible $2$-dimensional simplicial complex. If for every $(u,v,w) \in \Sigma (2, \Gamma)$ the following holds:
\begin{itemize}
\item $ \overline{\lambda_u} + \overline{\lambda_v} + \overline{\lambda_w} >0$
\item $ S_{(u,v)}S_{(u,w)} + S_{(u,v)}S_{(v,w)} + S_{(u,w)}S_{(v,w)} >0$
\end{itemize}
then $\Gamma$ has property (T).
\end{corollary}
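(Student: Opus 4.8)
The plan is to derive Corollary \ref{T for 2d} from Theorem 3.4 (the main theorem of this section) by choosing $k=1$, $n=2$, and $l=2$. With these parameters the link of a vertex is a graph, so $X_\tau$ is connected for every vertex $\tau$ by hypothesis, and the framework of Section 2 applies. Since $X$ is contractible and $\Gamma$ acts cocompactly, the vanishing of $L^2 H^1(X,\rho)$ for every unitary $\rho$ is equivalent to property (T) via the chain of equivalences recalled in the introduction (items (1)--(3)), so it suffices to verify the hypothesis of Theorem 3.4 in this case.

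First I would unwind the notation. For $k=1$, $n=2$ we have $\frac{k(n-k)}{k+1}=\frac{1\cdot 1}{2}=\frac12$, so for a vertex $\tau$ we have $\overline{\lambda_\tau}=\lambda_\tau-\frac12=\lambda(X_\tau)-\frac12$; and for a $1$-simplex $\sigma=(u,v)$ the quantity $S_\sigma=\overline{\lambda_u}+\overline{\lambda_v}=\lambda(X_u)+\lambda(X_v)-1$, matching the definition of $S_{(u,v)}$ in Theorem 1. Next, for $l=2$, $k=1$ the Example after the definition of $p_k^l$ gives $p_1^2(x_\sigma)=\sum_{\sigma\in F}\prod_{\sigma'\in F,\ \sigma'\neq\sigma}x_{\sigma'}$, where $F=\{\sigma\sqsubset\gamma:\dim\sigma=1\}$ has three elements, namely the three edges of the triangle $\gamma=(u,v,w)$. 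Writing $a=x_{(u,v)}$, $b=x_{(u,w)}$, $c=x_{(v,w)}$, this is $p_1^2=ab+ac+bc$. Substituting $x_\sigma=\lambda-S_\sigma$ we get the polynomial in $\lambda$
$$
q(\lambda)=(\lambda-S_{(u,v)})(\lambda-S_{(u,w)})+(\lambda-S_{(u,v)})(\lambda-S_{(v,w)})+(\lambda-S_{(u,w)})(\lambda-S_{(v,w)}),
$$
a quadratic whose roots are the two values of $\lambda$ for which the Lagrange system degenerates.

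The core of the argument is then: show that both hypotheses of the corollary together force the roots of $q$ to be strictly positive, hence bounded below by some $\varepsilon>0$ uniform over the finitely many $\Gamma$-orbits of $2$-simplices. Expanding, $q(\lambda)=3\lambda^2-2(S_{(u,v)}+S_{(u,w)}+S_{(v,w)})\lambda+(S_{(u,v)}S_{(u,w)}+S_{(u,v)}S_{(v,w)}+S_{(u,w)}S_{(v,w)})$. The sum of the roots is $\tfrac23(S_{(u,v)}+S_{(u,w)}+S_{(v,w)})=\tfrac23\cdot 2(\overline{\lambda_u}+\overline{\lambda_v}+\overline{\lambda_w})$, which is positive precisely by the first bullet; and the product of the roots is $\tfrac13(S_{(u,v)}S_{(u,w)}+S_{(u,v)}S_{(v,w)}+S_{(u,w)}S_{(v,w)})$, which is positive precisely by the second bullet. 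A real quadratic with positive leading coefficient, positive sum of roots, and positive product of roots either has two positive real roots or a conjugate pair of roots with positive real part. In the latter case the polynomial $q$ has no real root at all, so the implication ``$q(\lambda)=0\Rightarrow\lambda\geq\varepsilon$'' holds vacuously; in the former, both roots are positive reals, and taking $\varepsilon$ to be the minimum of all such roots over the finite set $\Sigma(2,\Gamma)$ (together with, say, $1$, to keep it positive if the set of real roots is empty) gives a uniform positive lower bound. Thus the hypothesis of Theorem 3.4 is met with this $\varepsilon$, yielding $L^2H^1(X,\rho)=0$ for all $\rho$ and hence property (T).

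The only point requiring a little care — and the natural main obstacle — is the case analysis on whether the quadratic $q$ has real roots: one must observe that positivity of the sum and the product of the roots does not by itself guarantee real roots, but that this is harmless because Theorem 3.4 only constrains real roots. It is also worth noting explicitly that the first bullet of the corollary is literally the first bullet of Theorem 1 after the substitution $\overline{\lambda_u}+\overline{\lambda_v}+\overline{\lambda_w}=\lambda(X_u)+\lambda(X_v)+\lambda(X_w)-\tfrac32>0$, and the second bullet is identical in both statements, so Corollary \ref{T for 2d} and Theorem 1 are the same statement; hence proving the corollary proves Theorem 1.
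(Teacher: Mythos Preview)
Your proposal is correct and follows essentially the same route as the paper: reduce to the main theorem with $k=1$, $l=2$, compute $p_1^2(\lambda-S_\sigma)$ as the quadratic $q(\lambda)$, and show the two bulleted conditions force every real root of $q$ to be positive. Your use of Vieta's formulas (positivity of the sum and product of the roots) is exactly what the paper means by ``one can find the roots of the polynomial explicitly'', and your added care about the non-real-roots case and about taking a uniform $\varepsilon>0$ over the finitely many orbits in $\Sigma(2,\Gamma)$ fills in details the paper leaves implicit.
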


\begin{proof}
By the above theorem it is enough to prove that for every $(u,v,w) \in \Sigma (2, \Gamma)$ all the roots of $p_1^2 (\lambda - S_{(u,v)} , \lambda - S_{(v,w)}, \lambda - S_{(u,w)} )$ are positive. We have
$$p_1^2 (\lambda - S_{(u,v)} , \lambda - S_{(v,w)}, \lambda - S_{(u,w)} ) = $$ 
$$ =(\lambda - S_{(u,v)})( \lambda - S_{(v,w)}) + (\lambda - S_{(u,v)})(\lambda - S_{(u,w)}) + (\lambda - S_{(v,w)})(\lambda - S_{(u,w)}) $$
one can find the roots of the polynomial explicitly and the roots of this polynomial are positive iff
\begin{itemize}
\item $ \overline{\lambda_u} + \overline{\lambda_v} + \overline{\lambda_w} >0$
\item $ S_{(u,v)}S_{(u,w)} + S_{(u,v)}S_{(v,w)} + S_{(u,w)}S_{(v,w)} >0$
\end{itemize}

\end{proof}

\section{Criteria via cosine of links}

We begin with a general definition for a cosine of a finite simplicial complex with respect to a group of automorphisms.

\begin{definition}
Let $Y$ be a connected finite simplicial complex, $\Lambda$ be a subgroup of $Aut(Y)$ and $\rho$ be a unitary representation of $\Lambda$. Define the cosine of $Y$ with respect to $\Lambda$ and $\rho$ to be 
$$ cos (Y,\Lambda,\rho) = sup \lbrace \dfrac{\sum_{(u,v) \in \Sigma (1, \Lambda)} m((u,v)) \langle \phi^1 (u) , \phi^1 (v) \rangle }{\sum_{(u,v) \in \Sigma (1, \Lambda)} m((u,v)) \vert \phi (u)\vert \vert \phi (v) \vert}: 0 \neq \phi \in C^0 (Y,\rho)  \rbrace$$
where $\phi^1$ is (as above) the projection of $\phi$ on the orthogonal compliment of the space of constant functions.
Define also
$$ cos (Y,\Lambda) = sup \lbrace cos (Y,\Lambda,\rho) : {\rho \text{ is a unitary representation}} \rbrace$$     
\end{definition}

\begin{remark}
Note that $\Lambda_1 < \Lambda_2$ implies that $cos (Y,\Lambda_1) \geq cos (Y,\Lambda_2)$ and in particular if we denote $cos (Y) = cos (Y,\lbrace e \rbrace)$ then $cos (Y) \geq cos (Y,\Lambda)$ for every $\Lambda$.
\end{remark}

Now we return to our general setting, i.e. $X$ is a simplicial complex, $\Gamma$ is a locally compact unimodular group of automorphisms of $X$ acting properly discontinuously on $X$ and so on.

\begin{definition}
For $\gamma \in \Sigma (k+1)$ define $A(\gamma, \Gamma, \rho)$ to be a matrix indexed by $\lbrace \sigma \in \Sigma (k) : \sigma \sqsubset \gamma \rbrace$ as 
$$ (A(\gamma,\Gamma,\rho))_{\alpha, \beta} = \begin{cases}
1 \: ; \: \alpha = \beta \\
-cos (X_{\alpha \cap \beta},\Gamma_{\alpha \cap \beta},\rho) \: ; \: \alpha \neq \beta
\end{cases}$$
where $\alpha \cap \beta$ is the simplex of dimension $k-1$ that is contained in $\alpha$ and in $\beta$.
In the same way, define 
$$ (A(\gamma,\Gamma))_{\alpha, \beta} = \begin{cases}
1 \: ; \: \alpha = \beta \\
-cos (X_{\alpha \cap \beta},\Gamma_{\alpha \cap \beta}) \: ; \: \alpha \neq \beta
\end{cases}$$
Denote $A(\gamma)=A(\gamma, \lbrace e \rbrace)$.
\end{definition}

\begin{theorem}
If for every unitary representation $\rho$ there is an $\varepsilon >0$ such that for every $\gamma \in \Sigma (k+1,\Gamma)$ the matrix $A (\gamma,\Gamma,\rho )$ is positive definite with eigenvalues greater or equal to $\varepsilon$ then $L^2 H^k(X,\rho)=0$ for every $\rho$. 
  
\end{theorem}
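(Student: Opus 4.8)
The plan is to mimic the structure of the proof of the Laplacian-eigenvalue criterion (Theorem 3.4) but replace the localization–eigenvalue estimate at each codimension-one link by the cosine estimate encoded in the matrix $A(\gamma,\Gamma,\rho)$. As in that proof, fix a unitary representation $\rho$, take $\phi \in L^k(X,\rho) \cap \ker(d)$, and reduce via Proposition \ref{resultfrombs} (1)–(4) to a purely local statement attached to each top-weight simplex $\gamma \in \Sigma(k+1,\Gamma)$: it suffices to show that, for such $\gamma$, if $\phi$ satisfies the cocycle constraints $\sum_{i}(-1)^i \phi(\nu_i)=0$ for all $(k+1)$-simplices $\nu$ inside $\gamma$, then $\Vert\delta\phi\Vert^2$ (equivalently, a suitable sum of $\Vert \phi_\tau^0\Vert^2$ over the codimension-one faces $\tau \sqsubset \gamma$) is bounded below by $\varepsilon$ times $\sum_{\sigma \sqsubset \gamma}\vert\phi(\sigma)\vert^2$. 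The idea is that, for each codimension-one face $\tau$, the localized cochain $\phi_\tau \in C^0(X_\tau,\rho_\tau)$ has a component $\phi_\tau^1$ in the orthocomplement of the constants whose norm is controlled by $\Vert\phi_\tau\Vert$ and $\Vert\phi_\tau^0\Vert$, and the definition of $cos(X_\tau,\Gamma_\tau,\rho)$ gives precisely a bound on the inner products $\langle \phi(\alpha),\phi(\beta)\rangle$ appearing in $\Vert d\phi\Vert^2$ in terms of $cos(X_\tau,\Gamma_\tau,\rho)\,\vert\phi(\alpha)\vert\,\vert\phi(\beta)\vert$ where $\alpha\cap\beta = \tau$.

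Concretely, I would first rewrite $\Vert d\phi\Vert^2$ by expanding $\langle d\phi(\nu),d\phi(\nu)\rangle$ over $(k+1)$-simplices; using $d\phi=0$, the "diagonal" terms $\sum \vert\phi(\sigma)\vert^2$ must be cancelled by the cross terms $2\sum_{\alpha\neq\beta}\pm\langle\phi(\alpha),\phi(\beta)\rangle$, and grouping cross terms according to $\tau=\alpha\cap\beta$ exhibits each such sum as a localized quantity on $X_\tau$. Then invoke the definition of $cos(X_\tau,\Gamma_\tau,\rho)$ to bound those cross terms, which after the dust settles produces an inequality of the shape $0 \;=\; \Vert d\phi\Vert^2 \;\geq\; \sum_{\sigma\sqsubset\gamma}\vert\phi(\sigma)\vert^2 \;-\; \sum_{\alpha\neq\beta}cos(X_{\alpha\cap\beta},\Gamma_{\alpha\cap\beta},\rho)\,\vert\phi(\alpha)\vert\,\vert\phi(\beta)\vert \;-\;(\text{terms involving }\phi_\tau^0)$. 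Writing $y_\sigma = \vert\phi(\sigma)\vert \ge 0$ and reading the right-hand quadratic form, the first two groups are exactly $y^{t} A(\gamma,\Gamma,\rho)\, y$; positive-definiteness of $A(\gamma,\Gamma,\rho)$ with smallest eigenvalue $\ge\varepsilon$ then forces $\varepsilon \sum_\sigma y_\sigma^2$ to be absorbed into the $\phi_\tau^0$-terms, i.e. into a multiple of $\Vert\delta\phi\Vert^2$. Summing back over $\gamma \in \Sigma(k+1,\Gamma)$ with the combinatorial weights from Proposition \ref{resultfrombs} (1) (using the identity $(l+1)!\binom{n-k}{l-k}m(\sigma)=\sum_{\gamma\supset\sigma}m(\gamma)$ specialized to $l=k+1$) yields $C\,\Vert\delta\phi\Vert^2 \ge \varepsilon\,\Vert\phi\Vert^2$ for an explicit constant $C$, and the abstract argument of Section 2.5 ("$\Vert d\phi\Vert=0 \Rightarrow \Vert\delta\phi\Vert^2 \ge \varepsilon'\Vert\phi\Vert^2$") concludes $L^2 H^k(X,\rho)=0$.

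Two points need care. First, I must make precise the passage from the $\Gamma$-equivariant global cochain $\phi$ to the genuinely local data on $X_\tau$ for which the definition of $cos(X_\tau,\Gamma_\tau,\rho)$ applies: the localization $\phi_\tau(\eta)=\phi(\tau\eta)$ lands in $C^0(X_\tau,\rho_\tau)$, and the weights $m_\tau(\eta)=m(\tau\eta)$ match the weights in the definition of the cosine, so the bookkeeping is the one already set up in Section 2.3 and Proposition \ref{resultfrombs} (2)–(3); but one must check that the bilinear form $\sum_{(u,v)}m((u,v))\langle\phi_\tau^1(u),\phi_\tau^1(v)\rangle$ controlled by $cos$ is indeed the form that appears after expanding $\Vert d\phi\Vert^2$ and extracting the $\tau$-contribution — this is the analogue, via Proposition \ref{resultfrombs} (4) and its localized form, of the identity relating $Q_\tau$ to $\Delta_\tau^+$. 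Second, the reduction "since both sides are quadratic, restrict to the sphere and use compactness / spectral bound on $A$" is exactly parallel to the end of the proof of Theorem 3.4, and I would invoke the same tensor-splitting Lemma 3.3 to pass from $H=\mathbb{C}$ to general $H$. The main obstacle I anticipate is precisely the first point — verifying that the cross-terms of $\Vert d\phi\Vert^2$, after grouping by $\tau$ and replacing each $\phi_\tau$ by $\phi_\tau^1 + \phi_\tau^0$, assemble cleanly into $y^t A(\gamma,\Gamma,\rho) y$ plus an error that is a nonnegative multiple of $\sum_\tau\Vert\phi_\tau^0\Vert^2$ (with the right sign), since the signs $[\nu:\sigma]$ and the interplay between the "orthogonal-to-constants" projection and the inner products $\langle\phi(\alpha),\phi(\beta)\rangle$ must conspire correctly; everything after that is the same linear-algebra-on-a-simplex argument already used for Theorem 3.4.
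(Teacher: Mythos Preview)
Your proposal is essentially the paper's proof. Two simplifications worth noting: the paper does not restrict to $\ker(d)$ but proves $\frac{n-k}{n-k+1}\Vert\delta\phi\Vert^{2}+\Vert d\phi\Vert^{2}\ge\varepsilon(n-k)\Vert\phi\Vert^{2}$ for every $\phi\in L^{k}(X,\rho)$; and neither the tensor lemma nor any Lagrange-multiplier argument is needed here, because $V_{\gamma}=(\vert\phi(\sigma)\vert)_{\sigma\sqsubset\gamma}$ is already a vector in $\mathbb{R}^{k+2}$, so the spectral hypothesis on $A(\gamma,\Gamma,\rho)$ yields $V_{\gamma}^{t}A(\gamma,\Gamma,\rho)V_{\gamma}\ge\varepsilon\vert V_{\gamma}\vert^{2}$ immediately, with no reduction to scalar-valued cochains. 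The bookkeeping you flag as the main obstacle (splitting $\phi_{\tau}=\phi_{\tau}^{0}+\phi_{\tau}^{1}$ and checking that the $\phi_{\tau}^{0}$ contribution has the right sign) is carried out exactly as you anticipate, via the identity $\langle\phi_{\tau}(u),\phi_{\tau}(v)\rangle=-\tfrac{1}{2}\vert d_{\tau}\phi_{\tau}((u,v))\vert^{2}+\tfrac{1}{2}\vert\phi_{\tau}(u)\vert^{2}+\tfrac{1}{2}\vert\phi_{\tau}(v)\vert^{2}$ together with Proposition~\ref{resultfrombs}(4).
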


\begin{proof}
Fix $\rho$ and $\phi \in L^k (X,\rho)$. For every $\gamma \in \Sigma (k+1,\Gamma)$ define the vector $V_\gamma = (\vert \phi (\sigma) \vert : \sigma \in \Sigma (k) , \sigma \sqsubset \gamma)$ so by the conditions of the theorem we get
$$ V_\gamma^t A (\gamma,\Gamma,\rho) V_\gamma \geq \varepsilon \sum_{\sigma \in \Sigma (k), \sigma \sqsubset \gamma} \vert \phi (\sigma) \vert^2 $$
which implies
$$\sum_{\gamma \in \Sigma (k+1,\Gamma)} \dfrac{m(\gamma )}{(k+2)! \vert \Gamma_\gamma \vert}  V_\gamma^t A (\gamma,\Gamma,\rho) V_\gamma \geq \varepsilon \sum_{\gamma \in \Sigma (k+1,\Gamma)} \dfrac{m(\gamma)}{\vert \Gamma_\gamma \vert (k+2)!(k+1)!}     \sum_{\sigma \in \Sigma (k), \sigma \subset \gamma} \vert \phi (\sigma) \vert^2$$
changing the order of summation in the right hand side of the inequality yields
$$ \varepsilon \sum_{\sigma \in \Sigma (k,\Gamma)} \dfrac{\vert \phi (\sigma) \vert^2}{\vert \Gamma_\sigma \vert (k+2)! (k+1)!}   \sum_{\gamma \in \Sigma (k+1), \sigma \subset \gamma}  m(\gamma) = $$
$$ =  \varepsilon (n-k) \sum_{\sigma \in \Sigma (k,\Gamma)} \dfrac{ m(\sigma) \vert \phi (\sigma) \vert^2}{\vert \Gamma_\sigma \vert (k+1)!} =  \varepsilon (n-k) \Vert \phi \Vert^2 $$
Now we shall work on the left hand side of the above inequality, i.e. on 
$$\sum_{\gamma \in \Sigma (k+1,\Gamma)} \dfrac{m(\gamma )}{(k+2)! \vert \Gamma_\gamma \vert}  V_\gamma^t A (\gamma,\Gamma,\rho) V_\gamma$$
writing the term $V_\gamma^t A (\gamma,\Gamma,\rho) V_\gamma$ explicitly gives 
$$ \sum_{\gamma \in \Sigma (k+1,\Gamma)} \dfrac{m(\gamma )}{ (k+2)! \vert \Gamma_\gamma \vert} \sum_{\sigma \in \Sigma (k), \sigma \sqsubset \gamma} \vert \phi (\sigma) \vert^2 - $$
$$ - \sum_{\gamma \in \Sigma (k+1,\Gamma)} \dfrac{m(\gamma )}{(k+2)! \vert \Gamma_\gamma \vert} \sum_{ \begin{scriptsize}
\begin{array}{c} \sigma,\sigma' \in \Sigma (k) \\ \sigma,\sigma' \sqsubset \gamma, \sigma \neq \sigma' \end{array} \end{scriptsize}} cos(X_{\sigma \cap \sigma'} ,\Gamma_{\sigma \cap \sigma'},\rho) \vert \phi (\sigma) \vert \vert \phi (\sigma') \vert$$

First we look at 
$$  \sum_{\gamma \in \Sigma (k+1,\Gamma)} \dfrac{m(\gamma )}{(k+2)! \vert \Gamma_\gamma \vert} \sum_{\sigma \in \Sigma (k), \sigma \sqsubset \gamma} \vert \phi (\sigma) \vert^2 = $$
$$ = \sum_{\gamma \in \Sigma (k+1,\Gamma)} \dfrac{m(\gamma )}{(k+2)! (k+1)! \vert \Gamma_\gamma \vert} \sum_{\sigma \in \Sigma (k), \sigma \subset \gamma} \vert \phi (\sigma) \vert^2 =$$
$$ = \sum_{\sigma \in \Sigma (k,\Gamma)} \dfrac{\vert \phi (\sigma) \vert^2}{(k+2)! (k+1)! \vert \Gamma_\sigma \vert} \sum_{\gamma \in \Sigma (k+1), \sigma \subset \gamma} m(\gamma) = $$
$$ (n-k) (k+2)! \sum_{\sigma \in \Sigma (k,\Gamma)} \dfrac{ m(\sigma ) \vert \phi (\sigma) \vert^2}{(k+2)!(k+1)! \vert \Gamma_\sigma \vert} = (n-k) \Vert \phi \Vert^2 $$

Now we shall look at 
$$ \sum_{\gamma \in \Sigma (k+1,\Gamma)} \dfrac{m(\gamma )}{(k+2)! \vert \Gamma_\gamma \vert}  \sum_{\begin{scriptsize} \begin{array}{c} \sigma,\sigma' \in \Sigma (k) \\ \sigma,\sigma' \sqsubset \gamma, \sigma \neq \sigma' \end{array} \end{scriptsize}} cos(X_{\sigma \cap \sigma'} ,\Gamma_{\sigma \cap \sigma'},\rho) \vert \phi (\sigma) \vert \vert \phi (\sigma') \vert = $$
$$ = \sum_{\gamma \in \Sigma (k+1,\Gamma)} \dfrac{m(\gamma )}{(k+2)! \vert \Gamma_\gamma \vert}  \sum_{\tau \in \Sigma (k-1), \tau \sqsubset \gamma} cos(X_{\tau} ,\Gamma_{\tau},\rho) \sum_{\begin{scriptsize} \begin{array}{c} u,v \in \Sigma (0) \\ \tau u, \tau v \subset \gamma \end{array} \end{scriptsize}}  \vert \phi (\tau u) \vert \vert \phi (\tau v) \vert = $$
$$ = \dfrac{1}{k!} \sum_{\gamma \in \Sigma (k+1,\Gamma)} \dfrac{m(\gamma )}{(k+2)! \vert \Gamma_\gamma \vert}  \sum_{\tau \in \Sigma (k-1), \tau \subset \gamma} cos(X_{\tau} ,\Gamma_{\tau},\rho) \sum_{\begin{scriptsize} \begin{array}{c} u,v \in \Sigma (0) \\ \tau u, \tau v \subset \gamma \end{array} \end{scriptsize}}  \vert \phi (\tau u) \vert \vert \phi (\tau v) \vert = $$
$$ = \dfrac{1}{k!} \sum_{\tau \in \Sigma (k-1,\Gamma)} \dfrac{cos(X_{\tau} ,\Gamma_{\tau},\rho)}{(k+2)! \vert \Gamma_\tau \vert}  \sum_{\gamma \in \Sigma (k+1), \tau \subset \gamma} m(\gamma) \sum_{\begin{scriptsize} \begin{array}{c} u,v \in \Sigma (0) \\ \tau u, \tau v \subset \gamma \end{array} \end{scriptsize}}  \vert \phi (\tau u) \vert \vert \phi (\tau v) \vert= $$
$$ = \dfrac{1}{k!} \sum_{\tau \in \Sigma (k-1,\Gamma)} \dfrac{cos(X_{\tau} ,\Gamma_{\tau},\rho)}{\vert \Gamma_\tau \vert}  \sum_{(u,v) \in \Sigma_\tau (1)} m_\tau ((u,v)) \vert \phi_\tau (u) \vert \vert \phi_\tau (v) \vert $$
By the definition of $cos(X_{\tau} ,\Gamma_{\tau},\rho)$ we get 
$$\frac{1}{k!} \sum_{\tau \in \Sigma (k-1,\Gamma)} \dfrac{cos(X_{\tau} ,\Gamma_{\tau},\rho)}{\vert \Gamma_\tau \vert}  \sum_{(u,v) \in \Sigma_\tau (1)} m_\tau ((u,v)) \vert \phi_\tau (u) \vert \vert \phi_\tau (v) \vert \geq$$
$$ \geq \frac{1}{k!} \sum_{\tau \in \Sigma (k-1,\Gamma)} \dfrac{1}{\vert \Gamma_\tau \vert}  \sum_{(u,v) \in \Sigma_\tau (1)} m_\tau ((u,v)) \langle \phi_\tau^1 (u) , \phi_\tau^1 (v) \rangle $$
Now note that 
$$\sum_{(u,v) \in \Sigma_\tau (1)} m_\tau ((u,v)) \phi_\tau^1 (u) = \sum_{u \in \Sigma_\tau (0)} \phi_\tau^1 (u) \sum_{v \in \Sigma_\tau (0), (u,v) \in \Sigma_\tau (1)} m_\tau ((u,v)) = $$
$$ = (n-k) \sum_{u \in \Sigma_\tau (0)} m_\tau (u) \phi_\tau^1 (u) = 0$$
and 
$$ \dfrac{1}{\vert \Gamma_\tau \vert} \sum_{(u,v) \in \Sigma_\tau (1)} m_\tau ((u,v)) \vert \phi_\tau^0 \vert^2  = \dfrac{1}{\vert \Gamma_\tau \vert} \sum_{u \in \Sigma_\tau (0)} \vert \phi_\tau^0 \vert^2 \sum_{v \in \Sigma_\tau (0), (u,v) \in \Sigma_\tau (1)} m_\tau ((u,v)) = $$
$$ = \dfrac{n-k}{\vert \Gamma_\tau \vert} \sum_{u \in \Sigma_\tau (0)} m_\tau (u) \vert \phi_\tau^0 \vert^2 = (n-k) \Vert \phi_\tau^0 \Vert^2$$
and therefore
$$ \dfrac{1}{\vert \Gamma_\tau \vert} \sum_{(u,v) \in \Sigma_\tau (1)} m_\tau ((u,v)) \langle \phi_\tau (u) , \phi_\tau (v) \rangle = $$
$$=\dfrac{1}{\vert \Gamma_\tau \vert} \sum_{(u,v) \in \Sigma_\tau (1)} m_\tau ((u,v)) \langle \phi_\tau^0 +  \phi_\tau^1 (u) , \phi_\tau^0 +  \phi_\tau^1(v) \rangle =  $$
$$ = (n-k) \Vert \phi_\tau^0 \Vert^2 + \dfrac{1}{\vert \Gamma_\tau \vert} \sum_{(u,v) \in \Sigma_\tau (1)} m_\tau ((u,v)) \langle \phi_\tau^1 (u) , \phi_\tau^1(v) \rangle$$
so we get 
$$\frac{1}{k!} \sum_{\tau \in \Sigma (k-1,\Gamma)} \dfrac{1}{\vert \Gamma_\tau \vert}  \sum_{(u,v) \in \Sigma_\tau (1)} m_\tau ((u,v)) \langle \phi_\tau^1 (u) , \phi_\tau^1 (v) \rangle = $$
$$ = \frac{1}{k!} \sum_{\tau \in \Sigma (k-1,\Gamma)} - (n-k) \Vert \phi_\tau^0 \Vert^2 + \dfrac{1}{\vert \Gamma_\tau \vert}  \sum_{(u,v) \in \Sigma_\tau (1)} m_\tau ((u,v)) \langle \phi_\tau (u) , \phi_\tau (v) \rangle = $$  
$$=  - \dfrac{n-k}{n-k+1} \Vert \delta \phi \Vert^2 + \frac{1}{k!} \sum_{\tau \in \Sigma (k-1,\Gamma)} \ \dfrac{1}{\vert \Gamma_\tau \vert}  \sum_{(u,v) \in \Sigma_\tau (1)} m_\tau ((u,v)) \langle \phi_\tau (u) , \phi_\tau (v) \rangle$$
Now we shall look at 
$$\frac{1}{k!} \sum_{\tau \in \Sigma (k-1,\Gamma)}  \dfrac{1}{\vert \Gamma_\tau \vert}  \sum_{(u,v) \in \Sigma_\tau (1)} m_\tau ((u,v)) \langle \phi_\tau (u) , \phi_\tau (v) \rangle = $$
$$ \frac{1}{k!} \sum_{\tau \in \Sigma (k-1,\Gamma)}  \dfrac{1}{\vert \Gamma_\tau \vert}  \sum_{(u,v) \in \Sigma_\tau (1)} [ m_\tau ((u,v)) (-\frac{1}{2}\vert d_\tau \phi_\tau ((u,v)) \vert^2 ) +$$
$$ + \frac{1}{2} m_\tau ((u,v)) \vert \phi (u) \vert^2 + \frac{1}{2} m_\tau ((u,v)) \vert \phi (u) \vert^2] = $$
$$ = \frac{1}{k!} \sum_{\tau \in \Sigma (k-1,\Gamma)} - \Vert d_\tau \phi_\tau \Vert^2 + \frac{1}{k!} \sum_{\tau \in \Sigma (k-1,\Gamma)}  \dfrac{1}{\vert \Gamma_\tau \vert}  \sum_{(u,v) \in \Sigma_\tau (1)}  m_\tau ((u,v)) \vert \phi (u) \vert^2 = $$
$$= \frac{1}{k!} \sum_{\tau \in \Sigma (k-1,\Gamma)} - \Vert d_\tau \phi_\tau \Vert^2 + (n-k) \Vert \phi_\tau \Vert^2 =  - \Vert d \phi \Vert^2 + (n-k) \Vert \phi \Vert^2$$
where the last equality is by Proposition \ref{resultfrombs} (4).
To sum up, we got that 

$$ \sum_{\gamma \in \Sigma (k+1,\Gamma)} \dfrac{m(\gamma )}{(k+2)! \vert \Gamma_\gamma \vert}  \sum_{\begin{scriptsize} \begin{array}{c} \sigma,\sigma' \in \Sigma (k) \\ \sigma,\sigma' \sqsubset \gamma, \sigma \neq \sigma' \end{array} \end{scriptsize}} cos(X_{\sigma \cap \sigma'} ,\Gamma_{\sigma \cap \sigma'},\rho) \vert \phi (\sigma) \vert \vert \phi (\sigma') \vert \geq $$
$$ \geq - \dfrac{n-k}{n-k+1} \Vert \delta \phi \Vert^2 - \Vert d \phi \Vert^2 + (n-k) \Vert \phi \Vert^2$$

and therefore 
$$ \sum_{\gamma \in \Sigma (k+1,\Gamma)} \dfrac{m(\gamma )}{ (k+2)! \vert \Gamma_\gamma \vert} \sum_{\sigma \in \Sigma (k), \sigma \sqsubset \gamma} \vert \phi (\sigma) \vert^2 - $$
$$ - \sum_{\gamma \in \Sigma (k+1,\Gamma)} \dfrac{m(\gamma )}{(k+2)! \vert \Gamma_\gamma \vert} \sum_{ \begin{scriptsize}
\begin{array}{c} \sigma,\sigma' \in \Sigma (k) \\ \sigma,\sigma' \sqsubset \gamma, \sigma \neq \sigma' \end{array} \end{scriptsize}} cos(X_{\sigma \cap \sigma'} ,\Gamma_{\sigma \cap \sigma'},\rho) \vert \phi (\sigma) \vert \vert \phi (\sigma') \vert \leq$$
$$\leq (n-k) \Vert \phi \Vert^2 + \dfrac{n-k}{n-k+1} \Vert \delta \phi \Vert^2 + \Vert d \phi \Vert^2 - (n-k) \Vert \phi \Vert^2 = $$
$$  \dfrac{n-k}{n-k+1} \Vert \delta \phi \Vert^2 + \Vert d \phi \Vert^2$$
so we got that under the conditions of the theorem
$$  \dfrac{n-k}{n-k+1} \Vert \delta \phi \Vert^2 + \Vert d \phi \Vert^2 \geq  \varepsilon (n-k) \Vert \phi \Vert^2$$
and we are done.

\end{proof}

\begin{corollary}
If there is an $\varepsilon >0$ such that for every $\gamma \in \Sigma (k+1,\Gamma)$ the matrix $A (\gamma, \Gamma)$ is positive definite with eigenvalues greater or equal to $\varepsilon$ then $L^2 H^k(X,\rho)=0$ for every $\rho$. And in the same way, if there is an $\varepsilon >0$ such that for every $\gamma \in \Sigma (k+1,\Gamma)$ the matrix $A (\gamma)$ is positive definite with eigenvalues greater or equal to $\varepsilon$ then $L^2 H^k(X,\rho)=0$ 
\end{corollary}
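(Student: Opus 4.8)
The plan is to derive both assertions directly from the preceding Theorem by an entrywise comparison of matrices. By the Remark following the definition of the cosine (and since $cos(Y,\Lambda)$ is \emph{defined} as a supremum over $\rho$), for every simplex $\tau$ and every unitary representation $\rho$ one has $0 \le cos(X_\tau,\Gamma_\tau,\rho) \le cos(X_\tau,\Gamma_\tau) \le cos(X_\tau)$, the lower bound coming from plugging a nonzero constant map into the defining supremum. Hence, for a fixed $\gamma \in \Sigma(k+1,\Gamma)$, the three matrices $A(\gamma)$, $A(\gamma,\Gamma)$ and $A(\gamma,\Gamma,\rho)$ all have $1$'s on the diagonal and non-positive off-diagonal entries, and entrywise $A(\gamma) \le A(\gamma,\Gamma) \le A(\gamma,\Gamma,\rho)$ --- that is, going from left to right the off-diagonal entries only move closer to $0$. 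So it suffices to establish the following elementary fact: if $A$ is a real symmetric matrix with $1$'s on the diagonal and non-positive off-diagonal entries which is positive definite with $\lambda_{\min}(A) \ge \varepsilon$, and $A'$ is obtained from $A$ by replacing each off-diagonal entry $-c_{ij}$ by $-c'_{ij}$ with $0 \le c'_{ij} \le c_{ij}$, then $A'$ is positive definite with $\lambda_{\min}(A') \ge \varepsilon$. Granting this, the hypothesis of the Theorem holds for $A(\gamma,\Gamma,\rho)$ for every $\rho$ with the same $\varepsilon$, so the Theorem gives $L^2 H^k(X,\rho)=0$; the statement for $A(\gamma)$ follows the same way, comparing $A(\gamma)$ with $A(\gamma,\Gamma,\rho)$ directly.

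To prove the elementary fact I would write $A = I - B$ and $A' = I - B'$, where $B,B'$ are symmetric with zero diagonal and non-negative entries and $0 \le B' \le B$ entrywise. For any unit vector $v$, writing $|v|$ for its entrywise absolute value (again a unit vector), one has $v^t B' v \le |v|^t B' |v| \le |v|^t B |v| \le \lambda_{\max}(B)$; taking the supremum over $v$ yields $\lambda_{\max}(B') \le \lambda_{\max}(B)$, which is just the monotonicity of the top eigenvalue of a non-negative symmetric matrix. Therefore $\lambda_{\min}(A') = 1 - \lambda_{\max}(B') \ge 1 - \lambda_{\max}(B) = \lambda_{\min}(A) \ge \varepsilon > 0$, as required.

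I do not expect any genuine obstacle; the corollary is essentially immediate once the right linear-algebra observation is in place. The single point that deserves care is precisely that observation: one must \emph{not} appeal to a naive ``larger matrix has larger eigenvalues'' principle (false for general symmetric matrices), but rather exploit the special shape ``identity minus a non-negative matrix'' together with the monotonicity of the Perron--Frobenius eigenvalue. As an alternative that bypasses eigenvalues altogether, one may re-run the proof of the Theorem verbatim: its hypothesis is used there only through the inequality $V_\gamma^t A(\gamma,\Gamma,\rho) V_\gamma \ge \varepsilon \sum_{\sigma \sqsubset \gamma} |\phi(\sigma)|^2$ for the \emph{non-negative} vector $V_\gamma = (|\phi(\sigma)|)_{\sigma \sqsubset \gamma}$, and for non-negative vectors the entrywise inequality $A(\gamma,\Gamma,\rho) \ge A(\gamma,\Gamma) \ge A(\gamma)$ immediately gives $V_\gamma^t A(\gamma,\Gamma,\rho) V_\gamma \ge V_\gamma^t A(\gamma,\Gamma) V_\gamma \ge \varepsilon |V_\gamma|^2$ (respectively with $A(\gamma)$ in place of $A(\gamma,\Gamma)$). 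I would present the Theorem-invoking argument as the main line, keeping the Theorem as a black box, and record the re-running argument as a remark.
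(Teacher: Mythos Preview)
The paper states this corollary with no proof at all, treating it as immediate from the preceding Theorem together with the monotonicity $\cos(X_\tau,\Gamma_\tau,\rho)\le\cos(X_\tau,\Gamma_\tau)\le\cos(X_\tau)$. Your overall strategy is the intended one.

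There is, however, a real slip in your main line. Your claim that $\cos(X_\tau,\Gamma_\tau,\rho)\ge 0$ ``by plugging a nonzero constant map'' fails whenever $\rho|_{\Gamma_\tau}$ has no nonzero invariant vector: a constant map $\psi\equiv h$ lies in $C^0(X_\tau,\rho_\tau)$ only if $h\in H^{\Gamma_\tau}$, so there may be nothing to plug in. And the inequality can genuinely fail: for $Y$ a single edge, $\Lambda=\mathbb{Z}/2$ swapping the endpoints, and $\rho$ the sign character, one gets $\cos(Y,\Lambda,\rho)=-1$. Without the lower bound $c'_{ij}\ge 0$ your ``elementary fact'' is simply false: with $B=0$ and $B'=\left(\begin{smallmatrix}0&-1\\-1&0\end{smallmatrix}\right)$ one has $A=I$ with $\lambda_{\min}(A)=1$, while $A'=\left(\begin{smallmatrix}1&1\\1&1\end{smallmatrix}\right)$ has $\lambda_{\min}(A')=0$. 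So the Perron--Frobenius step, as written, does not go through.

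Your alternative argument, by contrast, is exactly right and needs no sign information on $\cos(X_\tau,\Gamma_\tau,\rho)$: in the Theorem's proof the hypothesis is used only via $V_\gamma^t A(\gamma,\Gamma,\rho)V_\gamma\ge\varepsilon|V_\gamma|^2$ for the entrywise nonnegative vector $V_\gamma=(|\phi(\sigma)|)_{\sigma\sqsubset\gamma}$, and for nonnegative vectors the entrywise inequality $A(\gamma,\Gamma,\rho)\ge A(\gamma,\Gamma)\ge A(\gamma)$ gives this immediately from the corresponding bound for $A(\gamma,\Gamma)$ (or $A(\gamma)$). Promote this to your main argument and drop the eigenvalue comparison.
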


\begin{corollary}
Define the reduced cosine as follows:
$$ cos_{r} (X,\Gamma,\rho) = sup \lbrace \dfrac{\sum_{(u,v) \in \Sigma (1, \Gamma)} m((u,v)) \langle \phi (u) , \phi (v) \rangle }{\sum_{(u,v) \in \Sigma (1, \Gamma)} m((u,v)) \vert \phi (u)\vert \vert \phi (v) \vert}: 0 \neq \phi \in C^0 (X,\Gamma), \phi = \phi^1  \rbrace$$
and let $A_r (\gamma,\Gamma )$ be the corresponding matrix.  
If for every $\gamma \in \Sigma (k+1,\Gamma)$ the matrix $A_r (\gamma,\Gamma )$ is positive definite then $L^2 \mathcal{H}^k(X,\rho)=0$ for every $\rho$. 
\end{corollary}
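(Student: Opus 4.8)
The plan is to mimic the proof of the theorem proved above, but to exploit that for the \emph{reduced} cohomology the criterion recorded at the end of Section~2 is weaker: it suffices to show that every $\phi \in L^k(X,\rho)$ with $\Vert\delta\phi\Vert^2 = 0$ and $\phi \neq 0$ satisfies $\Vert d\phi\Vert^2 > 0$. So I would fix a unitary representation $\rho$ and such a $\phi$, and rerun the earlier computation while watching where the cosines enter.

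The key observation is that $\Vert\delta\phi\Vert^2 = 0$ forces, via Proposition \ref{resultfrombs} (3) (which expresses $\Vert\delta\phi\Vert^2$ as a positive multiple of $\sum_\tau \Vert\phi_\tau^0\Vert^2$), that $\phi_\tau^0 = 0$, i.e.\ $\phi_\tau = \phi_\tau^1$, for every $\tau\in\Sigma(k-1)$. This single fact is all that changes the earlier argument. There, $cos(X_\tau,\Gamma_\tau,\rho)$ entered only through the off-diagonal part of the forms $V_\gamma^t A(\gamma,\Gamma,\rho) V_\gamma$, and after the combinatorial rearrangements that part became
$$\frac{1}{k!} \sum_{\tau\in\Sigma(k-1,\Gamma)} \frac{cos(X_\tau,\Gamma_\tau,\rho)}{\vert\Gamma_\tau\vert} \sum_{(u,v)\in\Sigma_\tau(1)} m_\tau((u,v)) \vert\phi_\tau(u)\vert\,\vert\phi_\tau(v)\vert$$
Now because $\phi_\tau = \phi_\tau^1$ for all $\tau$, each nonzero $\phi_\tau$ is an admissible competitor in the supremum defining $cos_r(X_\tau,\Gamma_\tau,\rho)$; so if one instead builds the matrices $A_r(\gamma,\Gamma,\rho)$ with diagonal entries $1$ and off-diagonal entries $-cos_r(X_{\alpha\cap\beta},\Gamma_{\alpha\cap\beta},\rho)$, the identical chain of inequalities bounds the displayed term below by $\frac{1}{k!}\sum_\tau \frac{1}{\vert\Gamma_\tau\vert}\sum_{(u,v)} m_\tau((u,v))\langle\phi_\tau(u),\phi_\tau(v)\rangle$, with no residual $\Vert\delta\phi\Vert^2$ summand (it was a multiple of $\sum_\tau\Vert\phi_\tau^0\Vert^2$, which vanishes). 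Carrying the remaining manipulations verbatim then yields
$$\sum_{\gamma\in\Sigma(k+1,\Gamma)} \frac{m(\gamma)}{(k+2)!\,\vert\Gamma_\gamma\vert}\, V_\gamma^t A_r(\gamma,\Gamma,\rho)\, V_\gamma \ \leq\ \Vert d\phi\Vert^2$$
where $V_\gamma = (\vert\phi(\sigma)\vert : \sigma\in\Sigma(k),\ \sigma\sqsubset\gamma)$.

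To conclude I would pass from $A_r(\gamma,\Gamma,\rho)$ to $A_r(\gamma,\Gamma)$. Since the entries of $V_\gamma$ are nonnegative and $cos_r(X_\tau,\Gamma_\tau,\rho) \leq cos_r(X_\tau,\Gamma_\tau)$, the matrix $A_r(\gamma,\Gamma,\rho) - A_r(\gamma,\Gamma)$ has zero diagonal and nonnegative off-diagonal entries, so $V_\gamma^t A_r(\gamma,\Gamma,\rho) V_\gamma \geq V_\gamma^t A_r(\gamma,\Gamma) V_\gamma$. By hypothesis every $A_r(\gamma,\Gamma)$ with $\gamma\in\Sigma(k+1,\Gamma)$ is positive definite, hence $V_\gamma^t A_r(\gamma,\Gamma) V_\gamma \geq 0$, with strict inequality whenever $V_\gamma \neq 0$. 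Therefore $\Vert d\phi\Vert^2 \geq \sum_\gamma \frac{m(\gamma)}{(k+2)!\vert\Gamma_\gamma\vert} V_\gamma^t A_r(\gamma,\Gamma) V_\gamma \geq 0$; and if $\phi\neq 0$, pick $\sigma_0\in\Sigma(k)$ with $\phi(\sigma_0)\neq 0$, an ordered $(k+1)$-simplex $\gamma'$ with $\sigma_0\sqsubset\gamma'$ (which exists since $m(\sigma_0)\geq 1$), and write $\gamma' = g\gamma_0$ with $g\in\Gamma$ and $\gamma_0\in\Sigma(k+1,\Gamma)$; then $\Gamma$-invariance of $\vert\phi\vert$ forces $V_{\gamma_0}\neq 0$, so the $\gamma_0$-summand is strictly positive and $\Vert d\phi\Vert^2 > 0$. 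By the criterion of Section~2 this proves $L^2\mathcal{H}^k(X,\rho) = 0$ for every $\rho$. (When $\Gamma$ acts cocompactly $\Sigma(k+1,\Gamma)$ is finite, so one even obtains the uniform bound $\Vert d\phi\Vert^2 \geq \varepsilon(n-k)\Vert\phi\Vert^2$ with $\varepsilon = \min_\gamma \lambda_{\min}(A_r(\gamma,\Gamma)) > 0$.)

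The only step that needs real care is the bookkeeping in the middle paragraph: one must check that every term carrying a factor $cos(X_\tau,\Gamma_\tau,\rho)$ or a factor $\Vert\delta\phi\Vert^2$ in the earlier computation either vanishes identically once $\phi_\tau^0 = 0$ is imposed or is controlled by $cos_r$, while every other line there is a formal identity insensitive to replacing $cos$ by $cos_r$. Everything else is routine.
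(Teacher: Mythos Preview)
Your proposal is correct and follows essentially the same approach as the paper's proof, which is the one-line remark ``repeat the above proof of the theorem with $0 \neq \phi \in \ker(\delta)$ (which means $\phi_\tau = \phi_\tau^1$) and get $\Vert d\phi\Vert^2 > 0$''. You have simply unpacked this remark carefully, including the passage from $A_r(\gamma,\Gamma,\rho)$ to $A_r(\gamma,\Gamma)$ via the nonnegativity of the entries of $V_\gamma$, which the paper leaves implicit.
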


\begin{proof}
We just repeat the above proof of the theorem with $0 \neq \phi \in ker (\delta)$ (which means $\phi = \phi^1$) and get $\Vert d \phi \Vert^2 > 0 $. So $ker (\delta) \cap ker (d) = 0$ and we are done. 
\end{proof}

Applying the above corollary to the $2$-dimensional case gives Theorem 2.

\begin{remark}
The results stated above where inspired by \cite{K}, we do not reproduce the results of \cite{K} for dimension $n$ larger than $2$. In \cite{K} the cosine matrix refers always to the angles between the $n-1$ simplices to get property (T), but in our result the cosine matrix of those angles gives only the vanishing of the $n-1$ reduced cohomology and to get property (T) one needs to look at the cosine matrix of angles between $1$ simplices.        
\end{remark}

\section{Examples}
\subsection{GABs}
A generalized $m$-gon (or in another name, a 1-dimensional spherical building) is a connected bipartite graph $L=(V,E)$ of diameter $m$ and girth $2m$ in which each vertex lies on at least two edges. Denote $V=V_1 \cup V_2$ were $V_1,V_2$ are the two sides of the graph (there is an edge between two vertices only if one belong to $V_1$ and the other to $V_2$).  A generalized $m$-gon is said to have parameters $(s,t)$ if every vertex in $V_1$ has valency $s+1$ and every vertex in $V_2$ has valency $t+1$. A generalized $m$-gon is called thick if $s \geq 2,t \geq 2$. A theorem by Feit and Higman \cite{FH} states that a thick $m$-gon exists only if $m=2,3,4,6,8$. Moreover, Feit and Higman computed the smallest positive eigenvalue for the Laplacian on general $m$-gon of type $(s,t)$ and those are given in the list below
\begin{enumerate}
\item For $m=2$ the generalized $m$-gon is a complete bipartite graph and the smallest positive eigenvalue of the Laplacian is always $1$ (is does not depend on $(s,t)$).
\item For $m=3$ one always have $s=t$ and the generalized $m$-gon is the flag complex of a projective plane. In that case the smallest positive eigenvalue of the Laplacian is  
$$1 - \dfrac{\sqrt{s}}{s+1}$$
\item For $m=4$ the smallest positive eigenvalue of the Laplacian is
$$1 - \sqrt{\dfrac{s+t}{(s+1)(t+1)}}$$
\item For $m=6$ the smallest positive eigenvalue of the Laplacian is
$$1 - \sqrt{\dfrac{s+t+\sqrt{st}}{(s+1)(t+1)}}$$
\item For $m=8$ the smallest positive eigenvalue of the Laplacian is
$$1 - \sqrt{\dfrac{s+t+\sqrt{2st}}{(s+1)(t+1)}}$$
\end{enumerate}

"Geometries that are almost buildings" (GABs) and "Chamber system that are almost buildings" (SCABs) were introduced by Tits in \cite{Tits} (other names for those are "geometries of type M" and "chamber systems of type M"). GAB and SCABs can be viewed as finite $n$-dimensional simplicial complexes whose links of codimension $2$ are generalized polygons (exact definition of GABs and SCABs can be found in \cite{LecNot} and in \cite{Tits}). Below we introduce three examples of GABs and SCABs given in \cite{Kantor} and in \cite{LecNot}, whose universal cover are an exotic affine buildings (i.e. affine building that do not arise from a local field) and whose fundamental groups acts on those buildings. These examples are interesting in our context because they fail to meet \.{Z}uk's criterion for property (T), but they meet our criterion given in corollary \ref{T for 2d}. Another interesting thing about the examples below is that two of them are connected to sporadic simple groups. The reader should note that connection of the second example to one of the Fischer's group isn't straightforward - the Fischer's group does not act on the GAB but is related to it . In the table below the first column indicates the Coxeter diagram (with $(s,t)$ written above every link), the second column indicates the group associated with the GAB (if such exists), the third column indicates the universal cover and the last column indicates the reference from which the example was taken.

\begin{center}
\textbf{Table 1 - GAB's which meet out criterion}
\begin{tabular}{|c|c|c|c|}
\hline
diagram & group & universal cover & reference \\
\hline
\includegraphics[scale=0.5]{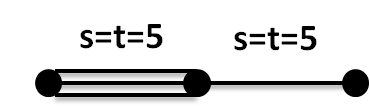} & Lyons & $\tilde{G}_2$ & \cite{Kantor}, 4th example  \\
\hline
\includegraphics[scale=0.5]{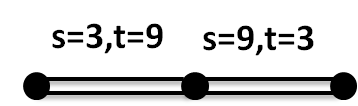} & Fischer & $\tilde{C}_2$ & \cite{Kantor}, 2th example \\
\hline
\includegraphics[scale=0.5]{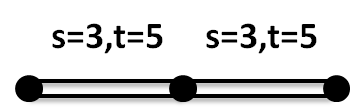} & - & $\tilde{C}_2$ & \cite[C.6.10]{LecNot} \\
\hline

\end{tabular}   
\end{center}

We will show that the first of those examples meets our criterion (checking the other two examples is left to the reader). In the first example, there are three types of links:
\begin{enumerate}
\item Bipartite graph in which $\lambda = 1$ and therefore $\overline{\lambda}_1=\frac{1}{2}$
\item Generalized $3$-gon with $s=5$ in which $\overline{\lambda}_2 = \frac{1}{2} -  \dfrac{\sqrt{5}}{6}$
\item Generalized $6$-gon with $s=t=5$ in which $\overline{\lambda}_3 = \frac{1}{2} -  \dfrac{\sqrt{15}}{6}$
\end{enumerate}
Therefore 
$$\overline{\lambda}_2 + \overline{\lambda}_3 = 1 - \dfrac{\sqrt{5}+\sqrt{15}}{6} < 0$$
so \.{Z}uk's criterion does not hold, but 
$$ \overline{\lambda}_1 + \overline{\lambda}_2 + \overline{\lambda}_3 = \dfrac{3}{2} - \dfrac{\sqrt{5}+\sqrt{15}}{6} >0$$
and 
$$ (\overline{\lambda}_1 + \overline{\lambda}_2)(\overline{\lambda}_1 + \overline{\lambda}_3) +(\overline{\lambda}_1 + \overline{\lambda}_2)(\overline{\lambda}_2 + \overline{\lambda}_3) + (\overline{\lambda}_1 + \overline{\lambda}_3)(\overline{\lambda}_2 + \overline{\lambda}_3) =$$
$$= 3\frac{5}{9}+\dfrac{15 \sqrt{3}}{36} - \frac{4}{6} (\sqrt{5} + \sqrt{15}) > 0$$
therefore the criterion stated in the corollary holds. 

Note that since this example was constructed using the Lyons group which act on the GAB, there is a natural extension of the Lyons group by the fundamental group of the GAB and this extension also has property (T) (either because it acts on the universal cover or becuase it is an extension of a finite group by a group with property (T)). 

\subsection{Hyperbolic buildings}

A Dynkin diagram is of compact hyperbolic type, if it is not of finite or affine type, but every proper subdiagram is finite. For every compact hyperbolic Dynkin diagram and every finite field $\mathbb{F}_q$, Tits  \cite{Tits2} constructed a Kac-Moody group, acting cocompactly on an hyperbolic building with thickness $q+1$. Compact hyperbolic Dynkin diagram were classified in \cite{Car} but we will deal only in the $2$-dimensional case, i.e. the Dynkin diagram has $3$ vertices. Since the links are again generalized polygons, we can use the results of \cite{FH} again for the smallest positive eigenvalue of the Laplacian of each link.
The table below compares the minimal $q$ needed to assure property (T) using our Theorem 1 and \cite[Theorem 1]{Zuk} (cases in which both theorems give the same $q$ were omitted). 
\begin{center}
\textbf{Table 2 - comparison of criteria for $2$ dimensional hyperbolic buildings}
\begin{tabular}{|c|c|c|}
\hline
diagram & minimal $q$ by Theorem 1  & minimal $q$ by \cite[Theorem 1]{Zuk} \\
\hline
\includegraphics[scale=0.25]{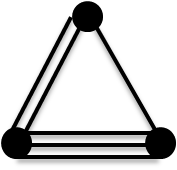} & 7 & 8 \\
\hline
\includegraphics[scale=0.25]{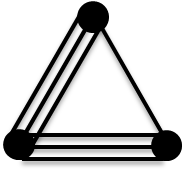} & 9 & 11 \\
\hline
\includegraphics[scale=0.25]{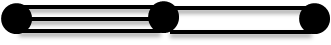} & 5 & 8 \\ 
\hline
\includegraphics[scale=0.25]{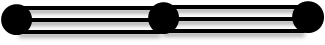} & 7 & 11 \\
\hline
\end{tabular}   
\end{center}
\bibliographystyle{alpha}
\bibliography{bibl}

\newcommand{\etalchar}[1]{$^{#1}$}
\def\polhk#1{\setbox0=\hbox{#1}{\ooalign{\hidewidth
  \lower1.5ex\hbox{`}\hidewidth\crcr\unhbox0}}}
\begin{thebibliography}{BdlHV08}

\bibitem[BdlHV08]{BHV}
Bachir Bekka, Pierre de~la Harpe, and Alain Valette.
\newblock {\em Kazhdan's property ({T})}, volume~11 of {\em New Mathematical
  Monographs}.
\newblock Cambridge University Press, Cambridge, 2008.

\bibitem[B{\'S}97]{BS}
W.~Ballmann and J.~{\'S}wi{\polhk{a}}tkowski.
\newblock On {$L^2$}-cohomology and property ({T}) for automorphism groups of
  polyhedral cell complexes.
\newblock {\em Geom. Funct. Anal.}, 7(4):615--645, 1997.

\bibitem[CCC{\etalchar{+}}10]{Car}
Lisa Carbone, Sjuvon Chung, Leigh Cobbs, Robert McRae, Debajyoti Nandi, Yusra
  Naqvi, and Diego Penta.
\newblock Classification of hyperbolic {D}ynkin diagrams, root lengths and
  {W}eyl group orbits.
\newblock {\em J. Phys. A}, 43(15):155209, 30, 2010.

\bibitem[DJ00]{DJ1}
Jan Dymara and Tadeusz Januszkiewicz.
\newblock New {K}azhdan groups.
\newblock {\em Geom. Dedicata}, 80(1-3):311--317, 2000.

\bibitem[FH64]{FH}
Walter Feit and Graham Higman.
\newblock The nonexistence of certain generalized polygons.
\newblock {\em J. Algebra}, 1:114--131, 1964.

\bibitem[Kan81]{Kantor}
William~M. Kantor.
\newblock Some geometries that are almost buildings.
\newblock {\em European J. Combin.}, 2(3):239--247, 1981.

\bibitem[Kas11]{K}
Martin Kassabov.
\newblock Subspace arrangements and property {T}.
\newblock {\em Groups Geom. Dyn.}, 5(2):445--477, 2011.

\bibitem[Ros86]{LecNot}
L.~A. Rosati, editor.
\newblock {\em Buildings and the geometry of diagrams}, volume 1181 of {\em
  Lecture Notes in Mathematics}.
\newblock Springer-Verlag, Berlin, 1986.
\newblock Lectures given at the third 1984 session of the Centro Internazionale
  Matematico Estivo (CIME) held in Como, August 26--September 4, 1984.

\bibitem[Sha00]{Shalom}
Yehuda Shalom.
\newblock Rigidity of commensurators and irreducible lattices.
\newblock {\em Invent. Math.}, 141(1):1--54, 2000.

\bibitem[Tit81]{Tits}
J.~Tits.
\newblock A local approach to buildings.
\newblock In {\em The geometric vein}, pages 519--547. Springer, New York,
  1981.

\bibitem[Tit87]{Tits2}
Jacques Tits.
\newblock Uniqueness and presentation of {K}ac-{M}oody groups over fields.
\newblock {\em J. Algebra}, 105(2):542--573, 1987.

\bibitem[{\.Z}uk96]{Zuk}
Andrzej {\.Z}uk.
\newblock La propri\'et\'e ({T}) de {K}azhdan pour les groupes agissant sur les
  poly\`edres.
\newblock {\em C. R. Acad. Sci. Paris S\'er. I Math.}, 323(5):453--458, 1996.

\end{thebibliography}

\end{document}